\newtheorem{thm}{Theorem}[section]
\newtheorem{lem}[thm]{Lemma}
\newtheorem{mydef}{Definition}[section]
\newtheorem{rem}{Remark}[section]
\newcommand{\bFormula}[1]{
\begin{equation} \label{#1}}
\newcommand{\eF}{\end{equation}}
\newcommand{\R}{{\mathbb{R}}}
\DeclareMathOperator{\supp}{supp}
\theoremstyle{remark}
\numberwithin{equation}{section}
\date{}
\newcommand{\Addresses}{{
  \bigskip
  \footnotesize

  Boris Muha, \textsc{Department of Mathematics, Faculty of Science, University of Zagreb}\par\nopagebreak
  \textit{E-mail address}: \texttt{borism a$\tau$ math.hr}

\medskip
Sr\dj{}an Trifunovi\'{c}, \textsc{Department of Mathematics and Informatics, Faculty of Sciences, University of Novi Sad}\par\nopagebreak
  \textit{E-mail address}: \texttt{srdjan.trifunovic a$\tau$ dmi.uns.ac.rs}

}}
\title{Analysis of an Inelastic Contact Problem for the Damped Wave Equation}
\author{Boris Muha, Sr\dj an Trifunovi\'c}
\begin{document}

\maketitle

\begin{abstract}
In this paper, we examine the dynamic behavior of a viscoelastic string oscillating above a rigid obstacle in a one-dimensional setting, accounting for inelastic contact between the string and the obstacle. We construct a global-in-time weak solution to this problem by using an approximation method that incorporates a penalizing repulsive force of the form $\frac1\varepsilon\chi_{\{\eta<0\}} (\partial_t\eta)^-$. The weak solution exhibits well-controlled energy dissipation, which occurs exclusively during contact and is concentrated on a set of zero measure, specifically when the string moves downward. Furthermore, the velocity is shown to vanish after contact in a specific weak sense. This model serves as a simplified framework for studying contact problems in fluid-structure interaction contexts.
\end{abstract}
\textbf{Keywords and phrases:} {contact mechanics, viscoelastic string, velocity penalization}
\\${}$ \\
\textbf{AMS Mathematical Subject classification (2020):} {74M15 (Primary), 74K05, 35L05 (Secondary)}

\section{Introduction}
\subsection{Problem description}
We study the following obstacle problem for a one-dimensional viscoelastic string, which deforms only in the vertical direction. Let $l > 0$ denote the length of the string. The vertical displacement of the string is denoted by $\eta: (0,\infty) \times (0,l) \to \mathbb{R}$. The string is always positioned above or in contact with the obstacle plane ${y = 0}$, satisfying the non-penetration condition:
\begin{eqnarray}\label{obst:cond}
    \eta \geq 0, \quad \text{on } (0,\infty)\times (0,l).
\end{eqnarray}
The motion of the string is governed by the damped wave equation with an unknown contact reaction force $F_{\text{con}}$:
\begin{eqnarray}
    \partial_{tt} \eta - \partial_{txx}\eta -  \partial_{xx} \eta &= F_{con}, 
    \quad \text{on } (0,\infty)\times (0,l). \label{str:eq}
\end{eqnarray}
The string is fixed at both endpoints, with boundary conditions:
\begin{eqnarray}
    &&\eta(t,0) = \eta(t,l)=h,
\end{eqnarray}
where $h>0$ is a given constant. The problem also satisfies the energy balance:
\begin{eqnarray}\label{en:bal}
    \frac{d}{dt}\left(\frac12\int_0^l |\partial_t \eta(t)|^2dx + \frac12\int_0^l|\partial_x\eta(t)|^2dx \right)+ \int_0^l |\partial_{tx}\eta(t)|^2dx + \int_0^l D_{con}(t)dx =0,
\end{eqnarray}
where $D_{\text{con}}$ represents the energy dissipation due to contact. Finally, the problem is supplemented with the following additional assumptions:
\begin{enumerate}
    \item[(A1)] $F_{con}, D_{con}\geq 0$;
    \item[(A2)] $\text{supp}(D_{con})\subseteq\text{supp}(F_{con})  \subseteq \partial\{\eta=0\}$;
    \item[(A3)] $\text{supp}(F_{con})\cap \text{int}\{\partial_t \eta\geq 0\}=\emptyset$; 
    \item[(A4)] $\lim\limits_{t\to t_0^+} \partial_t \eta(t,x_0)=0$ for all $(t_0,x_0)\in \partial\{\eta=0\}$.
\end{enumerate}

Condition (A1) implies that the contact force $F_{\text{con}}$ always acts to push the string away from the obstacle ${y = 0}$ and that the dissipation $D_{\text{con}}$ is naturally always non-negative. The second inclusion in the condition (A2) is a version of the well-known Signorini condition (see e.g. \cite{varineq}), which asserts that the reactive force is active only when the string is in contact with the obstacle (i.e., on $\{\eta =0 \}$). Furthermore, it follows immediately from equation \eqref{str:eq} that $F_{\text{con}} = 0$ in the interior of ${\eta = 0}$, as all terms on the left-hand side of \eqref{str:eq} vanish. The first inclusion for $\text{supp}(D_{\text{con}})$ indicates that the dissipation due to contact can only happen where the force $F_{\text{con}}$ is active. Condition (A3) states that if the string is not moving downward, no contact force is present. Mechanically, this condition follows from the fact that $F_{\text{con}}$ is a reactive force. Lastly, condition (A4) ensures that the string does not bounce off the obstacle. This condition is motivated by the interpretation of the model as a simplified representation of contact in fluid-structure interactions. In such interactions, an elastic body immersed in a fluid often exhibits zero velocity after contact with a rigid surface.

Since, by (A2), the contact force $F_{\text{con}}$ is supported on $\partial\{\eta = 0\}$, which is a nowhere dense set, we expect $F_{\text{con}}$ to be a singular measure. Consequently, we adopt a weak solution framework. Furthermore, the contact set can be highly irregular and complex, and in the general case, conditions (A3) and (A4) cannot be directly formulated or proven. For example, at the level of weak solutions, condition (A4) is in general not well defined via standard trace theory. However, under certain mild assumptions about the regularity of the contact set, we can formulate and prove both conditions (A3) and (A4).

\subsection{Literature review}

The contact problem is a decades old topic that has been studied by engineers, physicists, mathematicians and many others. From a mathematical perspective, such problems present numerous challenges. One major difficulty arises from the rapid transition between contact and no-contact regimes, resulting in an abrupt switch between governing equations. This sudden change induces a reactive contact force, typically represented as a singular force—a measure that enforces the impermeability condition. Additionally, the contact surface is not known in advance, making the problem a free-boundary problem with an unknown and generally irregular boundary. This irregularity stems from the singular nature of the contact force, which often results in weak solutions. Comparing two solutions becomes particularly problematic, as their contact sets and corresponding contact forces may differ, complicating the analysis of uniqueness. Lastly, tracking the velocity at the moment of contact poses significant challenges, especially on irregular contact sets where defining such quantities precisely is nontrivial.

Let us give a brief overview of some closely related topics. The contact problem for the wave equation has been studied intensively around four decades ago. This work relies on purely hyperbolic theory in 1D and explicit calculation of contact surface and solutions. In the celebrated work by Schatzmann \cite{schatzman}, she proves the existence of a unique global weak solution to the wave obstacle problem and gives an explicit integral formula for the solution. In a recent result, this solution has been studied by Fern\'andez-Real and Figalli in \cite{FernandezFigalli} where the authors show that a certain Lipschitz norm is preserved through collision. We also mention a similar contact problem with the gluing effect was studied in \cite{CitriniMarchionna}. The contact problem for a thermoelastic von K\'{a}rm\'{a}n plate has been studied in \cite{vonKarman}, see also \cite{varineq} for more details on the dynamic plate contact problems. The stationary contact problem (known as the obstacle problem) has been a well-covered topic based on the elliptic theory and calculus of variations, which, besides well-posedness theory, has given a very valuable insight into the regularity of the contact set boundary, see \cite{Caffarelli,Friedman,Petrosyan} and the references therein. Finally, let us point out a recent progress for the contact problem in for second-gradient materials. The static case was studied in \cite{static}, which was later extended to the quasistatic case with second-grade viscoelasticity in \cite{quasistatic}, and finally the fully inertial case was studied in \cite{CGM} where a weak solution was constructed by a two-time scale approach developed in \cite{BKS}.
 
This work is primarily motivated by the contact problem in fluid-structure interaction (FSI) problems, which involve the interplay between a fluid and a solid structure that interact dynamically through a shared interface. The presence of fluid adds significant complexity to contact problems, whereas irregularities in the shape of the fluid cavity, cavity degeneration, and rapid change in velocity upon contact all create challenges for mathematical analysis, as standard tools often fail under such conditions. Additionally, the low regularity of the fluid makes it difficult to track velocity and stress at the interface during and after contact, further complicating the enforcement of physical conditions in these regions. Notable examples of such FSIs are the production of sound in the trachea, caused by the vibration of vocal cords when they come into contact (e.g. \cite{Feistauer2014}) and closing of the heart valves (e.g. \cite{Borazjani}). Although FSIs have been studied extensively for decades, modeling and analysis of contact remain open problems. Most analytical results focus on rigid body models, where challenges such as the no-contact paradox depend on boundary conditions and the smoothness of the boundary, see e.g. \cite{GVHillairet10} and references within. Additionally, in the context of fluid-structure interaction, Grandmont and Hillairet \cite{GH2016} analyzed a coupled fluid–viscoelastic beam system and proved the global existence of strong solutions without contact, providing important insight into how fluid effects can prevent collisions even in deformable structures. Furthermore, much of the research addresses dynamics prior to potential contact, with relatively few works constructing solutions that allow for contact. For rigid bodies, global-in-time weak solutions encompassing contact have been developed \cite{Feireisl03,SarkaChemetov}, and these results were recently extended to the 2D-1D incompressible fluid and beam interaction problem \cite{Casanova21} and to the full 3D-3D compressible fluid and viscoelastic body case \cite{MalteSrdjan}, while the result in \cite{breitroy} gives a condition for no contact for the 2D-1D compressible fluid and beam interaction problem. However, none of these studies analyze the system's behavior after contact occurs.
The contact problem has also been extensively studied from the numerical point of view. Since this is not the focus of the paper, we refer the interested reader to \cite{VonWahl21} and the references within.

In this paper, we propose a toy problem as a first step toward better understanding the dynamics after contact in FSI systems.

\subsection{Notation and preliminaries}\label{notation}
To establish the mathematical framework of the problem, we introduce the notation and recall relevant preliminaries that will be used throughout this work.
First, for a measurable set $Q \subset \mathbb{R}^d$, $d = 1,2$, we denote the Lebesgue space $L^p(Q)$ as the space of measurable functions $f$ defined on $Q$ such that the norm is finite:
\begin{eqnarray*}
   &&||f||_{L^p(Q)}:=  \left(\int_Q |f|^p \, dx\right)^{1/p} <\infty,  \quad 1\leq p <\infty, \\
   &&||f||_{L^\infty(Q)}:=  {\text{ess}\sup}_{Q}|f| <\infty,  \quad p =\infty.
\end{eqnarray*}
while the space of locally integrable functions $L_{loc}^p(Q)$ is defined as
\begin{eqnarray*}
    f\in L_{loc}^p(Q) \iff f\in L^p(Q') \text{ for every } Q' \subset\subset Q, \quad p \in [1,\infty].
\end{eqnarray*}
The Sobolev spaces $W^{n,p}(Q)$ are denoted as the space of functions $f \in L^p(Q)$ such that the following norm is finite:
\begin{eqnarray*}
    &&||f||_{W^{n,p}(Q)} :=  \left(\sum_{|\alpha|=0}^n \int_Q |D^\alpha f|^p \, dx \right)^{1/p} <\infty,\quad  1\leq p <\infty, \\
    && ||f||_{W^{n,\infty}(Q)} :=  \sum_{|\alpha|=0}^n {\text{ess}\sup}_{Q} |D^\alpha f|  <\infty,
\end{eqnarray*}
where $D^{\alpha}f=\frac{\partial^{|\alpha|}}{\partial_1^{\alpha_1}\dots\partial_d^{\alpha_d}} f$ is a distributional derivative, $n \in \mathbb{N}$ and $\alpha = (\alpha_1, \dots, \alpha_d)$ is a multi-index with $\alpha_i \in \mathbb{N}_0$ , and $|\alpha|=\sum_{i=1}^d\alpha_i$. In the Hilbert space case, $p=2$, we use the standard notation $H^1(Q):=W^{1,2}(Q).$ The Bochner spaces $L^p(0,T; W^{1,q}(Q))$ are defined as the set of Bochner measurable functions $f$ such that:
\begin{eqnarray*}
   &&||f||_{L^p(0,T; W^{1,q}(Q))}  := \left(\int_0^T || f(t,\cdot)||_{W^{1,q}(Q)} \, dt \right)^{1/p} < \infty, \quad 1\leq p<\infty, \\
   &&||f||_{L^\infty(0,T; W^{1,q}(Q))}  :={\text{ess}\sup}_{t\in(0,T)} || f(t,\cdot)||_{W^{1,q}(Q)} < \infty,
\end{eqnarray*}
for any $1\leq q\leq \infty$. \\

The space of distributions $\mathcal{D}'(Q)$ is defined as the dual space of $C_c^\infty(Q)$, while the space of non-negative Borel measures is denoted as $\mathcal{M}^+(Q)$, which, by the Riesz representation theorem, is isomorphic to the space of non-negative linear functionals over $C_c(Q)$. Specifically, $f \in \mathcal{M}^+(Q)$ if and only if:
\begin{eqnarray*}
    0\leq \langle f,\varphi \rangle:= \int_Q \varphi df <\infty, \quad \text{ for every non-negative } \varphi \in C_c(Q),
\end{eqnarray*}
provided that $Q$ is a locally compact Hausdorff metric space (see e.g. \cite[Theorem 2.1.4]{RudinBook}). \\

We will use the mollification of functions defined on $Q$. Let $a \in L^1(Q)$. First, we extend $a$ by zero to define a function on $\mathbb{R}^d$. For a given $\zeta \in C_c^\infty([-1,1]^2)$ such that
$\int_{[-1,1]^2} \zeta dx = 1$ and $\zeta \geq 0$, we define the time-space mollification as: 
\begin{eqnarray*}
    a_\omega(t,x):= \int_{\mathbb{R}^2} \frac1{\omega^2}\zeta\left(\frac{t-\tau}\omega, \frac{x-s}\omega\right) a(\tau,s) \, d\tau \, ds,
\end{eqnarray*}
where $\omega > 0$ is the mollification parameter. \\

For a given function $f\in L^1(Q)$, we define the (essential) support as
\begin{eqnarray*}
    \text{supp}(f):=Q\setminus \bigcup\{O \subseteq Q: O \text{ is open and } f=0 \text{ a.e. on } O\},
\end{eqnarray*}
while for a given measure $\mu \in \mathcal{M}^+(Q)$, we define its support of $\mu$ as
\begin{eqnarray*}
    \text{supp}(\mu):=\{x\in Q: \mu(O)>0 \text{ for every open neighborhood }O \text{ of } x\}.
\end{eqnarray*}

Finally, let $f: Q \to \mathbb{R}$. We decompose $f$ into its negative and positive parts as follows: $f = f^+ - f^-$, where $f^+ = \max\{0, f\}$ and $f^- = \max\{0, -f\}$.

\section{Weak formulation and main result}

\begin{mydef}\label{weak:sol}
For a given $\eta_0\in H_0^1(0,l)$ with $\eta_0\geq c>0$ and $v_0\in L^2(0,l)$, we say that the triplet $(\eta,F_{con},D_{con})$ consisting of string displacement $\eta\in L^\infty(0,\infty; H^1(0,l))\cap W^{1,\infty}(0,\infty; L^2(0,l))\cap H^1(0,\infty; H^1(0,l))$, contact force $F_{con}\in \mathcal{M}_{loc}^+([0,\infty)\times[0,l])$ and contact dissipation $D_{con}\in \mathcal{M}^+([0,\infty)\times[0,l])$ is a global weak solution to the problem $\eqref{obst:cond}-\eqref{en:bal}$ if:

\begin{enumerate}
\item $\eta \geq 0$ on $(0,\infty)\times (0,l)$ and $\eta(t,0)=\eta(t,l)=h$ for all $t\geq 0$;
\item The supports of the contact force and the contact dissipation satisfy\footnote{In the condition $\eqref{supp:prop:2}$, the set $\text{int}\{\partial_t \eta\geq 0\}$ is to be understood as $(0,T)\times(0,l)\setminus \text{supp}((\partial_t\eta)^-)$}
\begin{eqnarray}
    &&\text{supp}(D_{con}) \subseteq \text{supp}(F_{con}) \subseteq \partial\{\eta=0\} \label{supp:prop:1},\\
    &&\text{supp}(D_{con})\cap \text{int}\{\partial_t \eta\geq 0\}=\emptyset, \label{supp:prop:2}\\
    &&|\text{supp}(D_{con})|=0, \label{supp:prop:3} \\
    &&\partial_t \eta = 0 \text{ a.e. on }\text{supp}(F_{con}) \label{supp:prop:4};
\end{eqnarray}

\item $F_{con},D_{con}$ and $\partial_t \eta$ are connected through
\begin{eqnarray}
    F_{con} (-\partial_t \eta)^\omega\to D_{con} \quad \text{ in }\mathcal{D}'((0,\infty)\times(0,l)) \label{moll:prop}, 
\end{eqnarray}
where $(\partial_t \eta)^\omega$ is the time-space mollification w.r.t. parameter $\omega>0$;

\item The momentum equation
    \begin{eqnarray}
    &&\int_0^\infty\int_0^l \partial_t \eta \partial_t \varphi \, dx \, dt- \int_{0}^\infty \int_0^l \partial_{tx}\eta \partial_x\varphi \, dx \, dt - \int_0^\infty\int_0^l \partial_{x}\eta \partial_{x} \varphi \, dx \, dt\\
    &&=-\int_0^l v_0 \varphi(0,\cdot)dx- \int_0^\infty\int_0^l \varphi dF_{con} \nonumber \\
    &&\label{momeqweak}
\end{eqnarray}
holds for all $ \varphi \in C_c^\infty([0,\infty)\times (0,l))$;

\item Local energy balance
\begin{eqnarray}
    && - \frac12\int_0^\infty\int_0^l |\partial_t \eta|^2 \partial_t \varphi \, dx \, dt-\frac12\int_0^\infty\int_0^l |\partial_{x}\eta|^2  \partial_t \varphi \, dx \, dt+\int_{0}^\infty \int_0^l |\partial_{tx}\eta|^2 \varphi \, dx \, dt \nonumber\\
    &&\quad \int_0^\infty \int_0^l\varphi d D_{con}+  \int_{0}^\infty \int_0^l \partial_{tx}\eta  \partial_t\eta\partial_x\varphi \, dx \, dt + \int_0^\infty\int_0^l \partial_{x}\eta  \partial_t\eta  \partial_{x}\varphi \, dx \, dt\nonumber\\
    &&=\frac12\int_0^l |v_0|^2\varphi(0) \, dx+\frac12\int_0^l |\partial_x \eta_0|^2\varphi(0) \, dx\label{enineq}
    \end{eqnarray}
holds for all $\varphi \in C_c^\infty([0,\infty)\times(0,l))$;
\item The following renormalized momentum inequality holds for the positive part of the velocity $(\partial_t \eta)^+:=\max\{\partial_t\eta,0\}$:
\begin{eqnarray}
     &&\int_0^\infty\int_0^l b((\partial_t \eta)^+) \partial_t \varphi \, dx \, dt- \int_{0}^\infty \int_0^l \partial_{tx}\eta b'((\partial_t \eta)^+)\partial_x\varphi \, dx \, dt\nonumber\\
     &&-\int_0^\infty\int_0^l|\partial_{x} (\partial_t\eta)^+|^2 b''((\partial_t \eta)^+) \varphi  \, dx \, dt \nonumber \\
     && \quad - \int_0^\infty\int_0^l \partial_{x}\eta b'((\partial_t \eta)^+)\partial_{x} \varphi \, dx \, dt -\int_{0}^\infty \int_0^l \partial_x\eta\partial_x b'((\partial_t \eta)^+) \varphi \, dx \, dt\nonumber \\    
     &&\geq -\int_0^l b((v_0)^+) \varphi(0) \, dx\label{renorm}
\end{eqnarray}
for any non-negative $\varphi \in C_c^\infty([0,\infty)\times (0,l))$ and any $b\in C^2 ([0,\infty))$ such that\footnote{These growth conditions are not optimal and can be improved, however for us $b(x)=x^2$ is enough to deduce $\eqref{vanish:vel}$.} $b(0)=b'(0)=0$, $b'(x)\leq cx$ and $0\leq b''(x)\leq c$.
\end{enumerate}
\bigskip
\end{mydef}

\begin{rem}
Although it is unclear whether $\supp(F_{\mathrm{con}})$ has positive measure in general, condition~\eqref{supp:prop:4} permits this possibility. In particular, it allows the possibility that $\supp(F_{\mathrm{con}}) \subseteq \partial\{\eta = 0\}$ to have positive measure. Although $\partial\{\eta = 0\}$ is nowhere dense, it may still be of positive measure—for instance, $\partial\{\eta = 0\}$ could be of the form $(t_1, t_2) \times S_{\mathrm{can}}$, where $S_{\mathrm{can}}$ is a so-called \emph{fat Cantor set}, which has positive Lebesgue measure, contains no isolated points, and is nowhere dense. While such a configuration seems unlikely, it cannot be excluded due to the limited regularity of $\eta$. In a similar situation, an explicit example was constructed in the context of fluid–structure interaction in~\cite[Section~3.1]{MalteSrdjan}.
\end{rem}

We are now ready to state the first main results on global existence:

\begin{thm}[Existence of a weak solution]\label{main1}
    For a given $\eta_0\in H_0^1(0,l)$ with $\eta_0\geq c>0$ and $v_0\in L^2(0,l)$, there exists a global weak solution in the sense of Definition $\ref{weak:sol}$.
\end{thm}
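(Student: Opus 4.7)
The plan is to construct the solution by a velocity-dependent penalization of the obstacle constraint, as announced in the abstract. For $\varepsilon>0$, consider the approximate problem
\begin{equation*}
\partial_{tt}\eta_\varepsilon-\partial_{txx}\eta_\varepsilon-\partial_{xx}\eta_\varepsilon=F_{con,\varepsilon}:=\frac{1}{\varepsilon}\chi_{\{\eta_\varepsilon<0\}}(\partial_t\eta_\varepsilon)^-
\end{equation*}
with the same initial and boundary data. Existence at fixed $\varepsilon$ is obtained by a Galerkin scheme; since $\chi_{\{\eta<0\}}$ is discontinuous, I would first smooth it into a sigmoid $g_\delta$, prove solvability for $(\varepsilon,\delta)$ fixed, and remove $\delta$ in an inner limit. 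The key algebraic identity $\partial_t\eta_\varepsilon\cdot(\partial_t\eta_\varepsilon)^-=-((\partial_t\eta_\varepsilon)^-)^2$ then gives, upon testing by $\partial_t\eta_\varepsilon$, the energy balance
\begin{equation*}
\frac{d}{dt}\Bigl(\tfrac12\|\partial_t\eta_\varepsilon\|_{L^2}^2+\tfrac12\|\partial_x\eta_\varepsilon\|_{L^2}^2\Bigr)+\|\partial_{tx}\eta_\varepsilon\|_{L^2}^2+\int_0^L D_{con,\varepsilon}\,dx=0,
\end{equation*}
with $D_{con,\varepsilon}:=F_{con,\varepsilon}\cdot(-\partial_t\eta_\varepsilon)=\frac{1}{\varepsilon}\chi_{\{\eta_\varepsilon<0\}}((\partial_t\eta_\varepsilon)^-)^2\geq 0$. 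This produces $\varepsilon$-uniform bounds on $\eta_\varepsilon$ in the class of Definition~\ref{weak:sol}, $D_{con,\varepsilon}\in L^1$, and the small-penalty bound $\chi_{\{\eta_\varepsilon<0\}}(\partial_t\eta_\varepsilon)^-=O(\sqrt\varepsilon)$ in $L^2$.

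The passage $\varepsilon\to 0$ then proceeds as follows. Aubin--Lions yields strong, in fact uniform, convergence $\eta_\varepsilon\to\eta$ together with weak convergence of the derivatives appearing in the energy. Rewriting the PDE as $F_{con,\varepsilon}=\partial_{tt}\eta_\varepsilon-\partial_{txx}\eta_\varepsilon-\partial_{xx}\eta_\varepsilon$ identifies it as a sum of distributional derivatives of bounded quantities, and combined with $F_{con,\varepsilon}\geq 0$ it is weak-$*$ compact in $\mathcal{M}^+_{\mathrm{loc}}$. The limit pair $(F_{con},D_{con})$ inherits positivity, and passing to the limit in the weak momentum equation and in the local energy identity produces \eqref{momeqweak} and \eqref{enineq}. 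The sign information $\eta\geq 0$ together with the shrinking of the penalty support to $\{\eta=0\}$ follows from testing against a smooth approximation of $-(\eta_\varepsilon)^-$ combined with the $\sqrt\varepsilon$ bound above; the PDE then excludes the interior of the contact set to give \eqref{supp:prop:1}, and \eqref{supp:prop:2} is a direct consequence of $(\partial_t\eta_\varepsilon)^->0$ on $\supp(F_{con,\varepsilon})$.

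The remaining properties are the technical core. For the renormalized inequality \eqref{renorm}, I would test the $\varepsilon$-equation against $b'((\partial_t\eta_\varepsilon)^+)\varphi$: the penalty term vanishes identically because $b'(0)=0$ and $(\partial_t\eta_\varepsilon)^+$ is zero whenever $(\partial_t\eta_\varepsilon)^->0$, and the $\varepsilon\to 0$ limit yields \eqref{renorm} by lower semicontinuity of the $b''|\partial_x(\partial_t\eta)^+|^2$ contribution. The mollification identity \eqref{moll:prop}, which is the distributional form of $D_{con}=-F_{con}\partial_t\eta$, is the hardest step: starting from the pointwise equality $F_{con,\varepsilon}(-\partial_t\eta_\varepsilon)=D_{con,\varepsilon}$ I would write
\begin{equation*}
F_{con,\varepsilon}(-\partial_t\eta_\varepsilon)^\omega-D_{con,\varepsilon}=F_{con,\varepsilon}\bigl[(-\partial_t\eta_\varepsilon)^\omega-(-\partial_t\eta_\varepsilon)\bigr]
\end{equation*}
and control the right-hand side by a commutator-type estimate using the $\sqrt\varepsilon$ bound and standard mollifier estimates, so that a joint diagonal limit $\varepsilon\to 0$, $\omega\to 0$ yields \eqref{moll:prop}. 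I expect this interplay between the singular measure $F_{con}$ and the weakly convergent velocity $\partial_t\eta$ to be the main obstacle; once \eqref{moll:prop} is available, the singular-support property \eqref{supp:prop:3} and the weak pinning \eqref{supp:prop:4} follow by combining it with the renormalized control on $(\partial_t\eta)^+$, which forces any mass of $F_{con}$ to concentrate where $\partial_t\eta$ vanishes.
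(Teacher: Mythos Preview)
Your overall architecture---velocity penalization, Galerkin with a smoothed indicator, energy identity, renormalization by testing with $b'((\partial_t\eta_\varepsilon)^+)\varphi$---is exactly the paper's. Two steps, however, are genuinely underargued and will not close as written.

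First, the local energy balance~\eqref{enineq}. You state that you pass to the limit with only ``weak convergence of the derivatives appearing in the energy'', but~\eqref{enineq} contains the quadratic terms $|\partial_t\eta_\varepsilon|^2$, $|\partial_x\eta_\varepsilon|^2$, $\partial_{tx}\eta_\varepsilon\,\partial_t\eta_\varepsilon$, $\partial_x\eta_\varepsilon\,\partial_t\eta_\varepsilon$. To identify their limits you need \emph{strong} $L^2_{\mathrm{loc}}$ convergence of both $\partial_t\eta_\varepsilon$ and $\partial_x\eta_\varepsilon$. The paper obtains the first from Aubin--Lions using the equation to bound $\partial_{tt}\eta_\varepsilon$ in $L^1(0,T;H^{-1})$, and the second by a separate argument: testing~\eqref{momeqweak:eps} with $(\eta^\varepsilon-H)\psi_k$ for a special time weight $\psi_k$ satisfying $-\tfrac12\psi_k'+\psi_k=\chi_{[0,k)}$, which converts $\int\partial_{tx}\eta^\varepsilon\partial_x\eta^\varepsilon\psi_k+\int|\partial_x\eta^\varepsilon|^2\psi_k$ into $\int_0^k\int|\partial_x\eta^\varepsilon|^2$ and yields norm convergence. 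Without these, \eqref{enineq} is only an inequality with an unidentified defect.

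Second, the mollification identity~\eqref{moll:prop}. What has to be shown is that the $\omega$-limit of $F_{con}(-\partial_t\eta)^\omega$---with the \emph{limiting} measure $F_{con}$---exists in $\mathcal D'$ and equals a nonnegative measure. Your commutator/diagonal route writes $F_{con,\varepsilon}(-\partial_t\eta_\varepsilon)^\omega-D_{con,\varepsilon}=F_{con,\varepsilon}\bigl[(-\partial_t\eta_\varepsilon)^\omega-(-\partial_t\eta_\varepsilon)\bigr]$, but $F_{con,\varepsilon}$ is only bounded in $L^1$ while the mollifier error is not $L^\infty$-small uniformly in~$\varepsilon$ (there is no $\varepsilon$-uniform time regularity of $\partial_t\eta_\varepsilon$ beyond $L^2_tH^1_x$, since $\partial_{tt}\eta_\varepsilon$ contains the $L^1$ term $F_{con,\varepsilon}$). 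Hence the commutator does not vanish uniformly and a diagonal limit does not give the sequential statement~\eqref{moll:prop}. The paper proceeds differently: it \emph{defines} $D_{con}$ as the distributional limit $-\lim_{\omega\to0}F_{con}(\partial_t\eta)^\omega$ (so~\eqref{moll:prop} is tautological), and the actual work is to show that this object is the nonnegative measure appearing in~\eqref{enineq}. This is done by comparing two local energy identities---one obtained by testing the limit equation with $(\partial_t\eta)^\omega\varphi$ and sending $\omega\to0$, the other by sending $\varepsilon\to0$ in the $\varepsilon$-level identity---and showing that on any ball disjoint from $\supp D_{con}$ the concentration defect of $|\partial_{tx}\eta^\varepsilon|^2+F_{con}^\varepsilon(\partial_t\eta^\varepsilon)^-$ vanishes, hence globally $D_{con}$ coincides with that defect measure $\sigma\in\mathcal M^+$. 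The properties~\eqref{supp:prop:3} and~\eqref{supp:prop:4} are then obtained directly: $\partial_t\eta=0$ a.e.\ on $\{\eta=0\}\supseteq\supp F_{con}$ simply because $\eta\in H^1_t$ attains its minimum there, and $|\supp D_{con}|=0$ follows from Egorov's theorem applied to $(\partial_t\eta)^\omega\to\partial_t\eta$ combined with $\partial_t\eta=0$ a.e.\ on $\supp F_{con}$.
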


Note that the condition for vanishing velocity upon contact, given in (A4), needs to be addressed separately, as it is not explicitly mentioned in Definition \ref{weak:sol}. Under the additional assumption that $\partial \{\eta = 0\}$ is locally the graph of a strictly monotonous\footnote{If $f$ is constant in time, then trivially $\eta = \partial_t \eta = 0$ on its graph. Therefore, strict monotonicity does not limit generality compared to mere monotonicity.} continuous function $f$, i.e. there exists a function $f$ such that $\{(t, f(t)): a \leq t \leq b\} \subset \partial \{\eta = 0\}$, we expect the velocity to change its sign and therefore be zero along this graph. However, since in general continuous and monotone functions satisfy only $f \in W^{1,1}(a,b)$, the trace cannot be defined in the standard sense. Indeed, first noticing that $\partial_t \eta \in L^2(0, T; H^1(0,l)) \hookrightarrow L^2(0,T; C([0,l]))$ implies that the restriction on this graph is well-defined as
\begin{eqnarray*}
    N_f: \partial_t \eta(t,x) \mapsto \partial_t \eta(t,f(t))\in L^2(a,b)
\end{eqnarray*}
where $N_f$ is the Nemytskii operator generated by $f$, and then noticing that the line element of the graph of $f$ is given by $\sqrt{1 + f'(t)^2} \in L^1(a,b)$, we cannot expect that the product $\eta(t,f(t)) \sqrt{1 + f'(t)^2}$ is integrable in general. Surprisingly, $\partial_t \eta$ can still be shown to be zero on this graph in a weak sense stated below, avoiding explicit reliance on the integral.

If, on the other hand, the function $f$ has a jump at $t_0$, then ${t_0} \times (x_0, x_1) \subseteq \partial \{\eta = 0\}$, and we address this case separately. This follows from the inequality
$\eqref{renorm}$ and relies on weak continuity in time. This is addressed in the second main result:

\begin{thm}[Characterization of the velocity upon contact]\label{main2}
Assume that $\eta\in H^1(0,T; H^1(0,l))$ with $\eta\geq 0$ is given.  

\begin{enumerate}
    \item Let $[a,b]\subseteq[0,T]$ and let $f:[a,b]\to (0,l)$ be continuous and monotonous such that its graph satisfies $\{(t,f(t)): t\in [a,b]\}\subseteq \partial\{\eta =0\}$. Then $\partial_t \eta$ is zero on the graph of $f$ in the 
    the following weak sense
    \begin{eqnarray}
    \int_{a}^b \int_{\{f(t)\leq x\leq l\}} \partial_{tx}\eta \varphi \, dx \, dt = - \int_{a}^b \int_{\{f(t)\leq x\leq l\}} \partial_t\eta \varphi_x \, dx \, dt \label{par:int:1}
\end{eqnarray}
and
\begin{eqnarray}
    \int_{a}^b \int_{\{0\leq  x\leq f(t)\}} \eta_{tx} \varphi \, dx \, dt= - \int_{a}^b \int_{\{0\leq  x\leq f(t)\}} \eta_{t} \varphi_x  \, dx \, dt \label{par:int:2}
\end{eqnarray}
for any $\varphi \in L^2(0,T; H_0^1(0,l))$;
\item Assume additionally that $\eta\in L^\infty(0,T; H_0^1(0,l))\cap W^{1,\infty}(0,T; L^2(0,l))$ satisfies the renormalized momentum inequality $\eqref{renorm}$. If $\eta = 0$ on $\{t_1\}\times [x_0,x_1]$ and $\partial_t \eta \leq 0$ on $(t_0,t_1)\times (x_0,x_1)$ for some $0<t_0<t_1<T$ and $0<x_0<x_1<l$, then 
\begin{eqnarray}
    \lim\limits_{\delta\to 0} \frac1\delta\int_{t_1}^{t_1+\delta}\int_{x_0}^{x_1}
    \partial_t \eta(t,x)\phi(x) \, dx = 0, \label{vanish:vel}
\end{eqnarray}
for any $\phi\in L^2(x_0,x_1)$

\end{enumerate}
\end{thm}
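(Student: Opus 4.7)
The plan is to extend $\eta$ across the graph of $f$ into an $H^1$ function and then apply standard integration by parts. Since $\eta \in H^1(0, T; H^1(0, L))$ embeds continuously into $C([0, T] \times [0, L])$, the pointwise value $\eta(t, f(t))$ is well-defined and equals $0$ for every $t \in [a, b]$. Assume without loss of generality that $f$ is strictly increasing. Introduce the one-sided truncation
\begin{equation*}
\tilde\eta(t, x) := \eta(t, x)\, \chi_{\{x > f(t)\}}(t, x),
\end{equation*}
and aim to show $\tilde\eta \in H^1((a, b) \times (0, L))$ with weak derivatives $\partial_x \tilde\eta = \partial_x \eta\, \chi_{\{x > f(t)\}}$ and $\partial_t \tilde\eta = \partial_t \eta\, \chi_{\{x > f(t)\}}$, i.e.\ without jump contributions along the graph of $f$ in either variable. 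Once this is established, $\eqref{par:int:1}$ follows from standard integration by parts in $x$ for $\tilde\eta$, with the boundary terms at $x = 0, L$ vanishing because $\varphi \in H_0^1(0, L)$; density then extends the identity from $\varphi \in C_c^\infty$ to $\varphi \in L^2(0, T; H_0^1(0, L))$, and the symmetric truncation $\eta\, \chi_{\{x < f(t)\}}$ gives $\eqref{par:int:2}$. The derivative identification proceeds slice-wise: for fixed $t$, the vanishing $\eta(t, f(t)) = 0$ makes $\tilde\eta(t, \cdot)$ an $H^1$-truncation with no jump contribution; for fixed $x$, strict monotonicity yields $\{t \in (a, b) : f(t) < x\} = (a, g(x))$ with $g = f^{-1}$ continuous on the relevant range, and $\eta(g(x), x) = 0$ rules out a jump in the time derivative. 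Fubini combines the two slice-wise statements into the global $H^1$-identification. The main subtlety is that $g$ is only continuous and possibly not $W^{1,1}$; the slice-wise argument survives precisely because it requires only the pointwise vanishing $\eta(g(x), x) = 0$, not any regularity of $g'$.

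\textbf{Part 2.} Here the plan is to reduce the claim to an $L^2$-estimate on $(\partial_t \eta)^+$ and then extract this estimate from the renormalized inequality. Using $\eta(t_1, \cdot) = 0$ on $[x_0, x_1]$, the fundamental theorem of calculus in $t$ gives
\begin{equation*}
\int_{t_1}^{t_1+\delta} \partial_t \eta(t, x)\, dt = \eta(t_1 + \delta, x) \geq 0,
\end{equation*}
which yields $\int_{t_1}^{t_1+\delta} (\partial_t \eta)^-(t, x)\, dt \leq \int_{t_1}^{t_1+\delta} (\partial_t \eta)^+(t, x)\, dt$ pointwise in $x$. Combined with Cauchy--Schwarz in $(t, x)$, this reduces $\eqref{vanish:vel}$ to proving the rescaled bound
\begin{equation*}
\frac{1}{\delta} \int_{t_1}^{t_1+\delta} \int_0^L ((\partial_t \eta)^+)^2\, \chi(x)\, dx\, dt \longrightarrow 0 \quad \text{as } \delta \to 0^+
\end{equation*}
for every non-negative $\chi \in C_c^\infty(x_0, x_1)$. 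The passage from these smooth cutoffs to arbitrary $\phi \in L^2(x_0, x_1)$ will go by approximation $\tilde\phi_n \to \phi$ in $L^2$, applying the rescaled bound to $\chi = \tilde\phi_n^2$, and absorbing the error $\|\phi - \tilde\phi_n\|_{L^2}$ using $\partial_t \eta \in L^\infty(L^2)$.

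To establish the rescaled bound, I would test $\eqref{renorm}$ with $b(x) = x^2$ and $\varphi(t, x) = \psi_\delta(t)\, \chi(x)$, where $\psi_\delta \in C_c^\infty([0, \infty))$ satisfies $\psi_\delta(0) = 0$, equals $1$ on some interval $[t_0', t_1]$ with $t_0 < t_0' < t_1$, and decreases linearly from $1$ to $0$ on $[t_1, t_1 + \delta]$. The ramp-up portion of $\psi'_\delta$ contributes nothing because $(\partial_t \eta)^+ \equiv 0$ on $(t_0, t_1) \times (x_0, x_1) \supset (t_0, t_1) \times \supp(\chi)$, while the ramp-down portion produces exactly $-\frac{1}{\delta} \int_{t_1}^{t_1+\delta} \int ((\partial_t \eta)^+)^2 \chi$. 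All remaining derivative terms on the right-hand side involve either $(\partial_t \eta)^+$ or $\partial_x (\partial_t \eta)^+$, both vanishing on $(t_0, t_1) \times (x_0, x_1)$, so their contribution reduces to an integral over $(t_1, t_1 + \delta) \times \supp(\chi)$. Cauchy--Schwarz combined with $\partial_t \eta \in L^\infty(L^2) \cap L^2(H^1)$, $\partial_x \eta \in L^\infty(L^2)$, and absolute continuity of Lebesgue integration giving $\|\partial_{tx}\eta\|_{L^2((t_1, t_1+\delta) \times (0, L))} \to 0$ then yields $o(\delta^{1/2})$-type estimates on those terms; the cross term $\partial_x \eta\, \partial_x (\partial_t \eta)^+$ is to be controlled by Young's inequality, absorbing half into the dissipative $|\partial_x (\partial_t \eta)^+|^2$ term. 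The main obstacle is coordinating all the $\partial_x$-derivative contributions simultaneously, and the decisive choice---placing $\chi$ strictly inside $(x_0, x_1)$ so that both $\supp(\chi)$ and $\supp(\chi')$ lie in the region where $(\partial_t \eta)^+ = 0$ for $t < t_1$---is precisely what kills the otherwise-uncontrollable source integrals coming from the past interval.
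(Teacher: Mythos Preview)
Your Part 2 is correct and essentially the same strategy as the paper's: test the renormalized inequality with $b(x)=x^2$ and a time-localized bump at $t_1$, use the vanishing of $(\partial_t\eta)^+$ on $(t_0,t_1)\times\supp\chi$ to kill the past contribution, and then bound the remaining terms on $(t_1,t_1+\delta)$. Your absorption of $\partial_x\eta\,\partial_x(\partial_t\eta)^+$ via Young's inequality into the dissipative term is a clean way to handle that cross term. One small remark: make sure your $\psi_\delta$ is supported in $(t_0,\infty)$, so that its ramp-up sits entirely inside $(t_0,t_0')\subset(t_0,t_1)$; otherwise the ramp-up portion of $\psi_\delta'$ is not covered by the vanishing hypothesis.

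Your Part 1, however, has a genuine gap. You show that $\tilde\eta=\eta\,\chi_{\{x>f(t)\}}\in H^1$ with $\partial_x\tilde\eta=\partial_x\eta\,\chi$ and $\partial_t\tilde\eta=\partial_t\eta\,\chi$; this is fine and uses only $\eta(t,f(t))=0$. But \eqref{par:int:1} is an integration-by-parts identity for $\partial_t\eta$, not for $\eta$: it asserts that $\partial_x(\partial_t\eta\,\chi_{\{x>f(t)\}})=\partial_{tx}\eta\,\chi_{\{x>f(t)\}}$ as distributions, i.e.\ that the trace of $\partial_t\eta$ (not $\eta$) along the graph vanishes. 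Knowing $\tilde\eta\in H^1$ gives you no control on $\partial_x(\partial_t\tilde\eta)$; commuting derivatives only tells you $\partial_x(\partial_t\eta\,\chi)=\partial_t(\partial_x\eta\,\chi)$, and the right-hand side carries an uncontrolled boundary term $\partial_x\eta(g(x),x)\,\delta_{g(x)}$ in the $t$-slice. So ``standard integration by parts in $x$ for $\tilde\eta$'' does not deliver \eqref{par:int:1}.

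The paper's route is different and addresses exactly this point. It proves a zero-trace criterion (Lemma~\ref{zero:trace:lemma}): if $\int_{\{f<x<f+\delta\}} u\,\varphi$ and $\int_{\{f-\delta<x<f\}} u\,\varphi$ satisfy one-sided $o(\delta)$ bounds, then the partial-integration identities hold for $u$. For $u=\partial_t\eta$ these strip integrals are computed by integrating by parts in $t$ (Reynolds-type formula on the moving strip), which converts $\partial_t\eta$-integrals into $\eta$-integrals on the graph and on the $\delta$-shifted graph; now $\eta(t,f(t))=0$ and the H\"older continuity $\eta\in C^{0,\alpha}$ give the needed $o(\delta)$ control. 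In short, the mechanism that transfers the known vanishing of $\eta$ on the graph to vanishing of $\partial_t\eta$ is an integration by parts in the \emph{time} variable over a thin strip --- precisely the ingredient missing from your extension argument.
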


Finally, the last result concerns the contact force. In particular, assuming that the boundary $\partial\{\eta=0\}$ is sufficiently regular, we first characterize the contact force as a jump of stress or velocity, and show that the condition (A3) holds, i.e. $F_{con}$ is indeed a reactive force.

\begin{rem} Let us point out that all of our numerical data strongly indicate that the contact set is in fact quite regular. Two numerical examples that confirm this can be seen in Section \ref{numerics}.
\end{rem}

\begin{thm}[Characterization of contact force]\label{main3}
Let $\eta_0\in H_0^1(0,l)$, $v_0\in L^2(0,l)$, let $(\eta,F_{con},D_{con})$ be a weak solution in the sense of Definition $\ref{weak:sol}$. 
\begin{enumerate}
\item Let $f\in H^1(a,b)$ such that $\{(t,f(t)): a\leq t \leq b\}\subset \partial \{\eta=0 \}$. Then $F_{con}$ represents the jump in stress over the graph of $f$
\begin{eqnarray}
    &&F_{con}\Big(\{(t,f(t)): a\leq t \leq b\}\Big) \nonumber \\
    &&=-\lim\limits_{\delta\to 0}\frac1\delta\Bigg[ \int_a^b \int_{f(t)}^{f(t)+\delta} \partial_x \eta \, dx \, dt -\int_a^b \int_{f(t)-\delta}^{f(t)} \partial_x \eta \, dx \, dt \nonumber \\
    &&\quad + \int_a^b \int_{f(t)}^{f(t)+\delta} \partial_{tx} \eta \, dx \, dt- \int_a^b \int_{f(t)-\delta}^{f(t)} \partial_{tx} \eta \, dx \, dt  \Bigg],\qquad\qquad\label{iden:force}
\end{eqnarray}
and the condition (A3) holds on this graph
\begin{eqnarray}
  F_{con}\Big(\{(t,f(t)): a\leq t \leq b\}\cap \text{int}\{\partial_t \eta \geq 0\}\Big)= 0  ; \label{iden:forcea3}
\end{eqnarray}
\item If $\{t_1\}\times (x_0,x_1)\subseteq \partial \{\eta=0 \}$, then $F_{con}(t,x)=\delta_{t_1}(t)\otimes f_{con}(x)dx$ on $\{t_1\}\times (x_0,x_1)$ and $f_{con} \in L^2(x_0,x_1)$ represents the jump in velocity over $\{t_1\}\times (x_0,x_1)$:
\begin{eqnarray}
     f_{con}(x) = \lim\limits_{\delta\to 0}\frac1\delta \left[\int_{t_1}^{t_1+\delta}
    \partial_t \eta(t,x) \, dt- \int_{t_1-\delta}^{t_1}
    \partial_t \eta(t,x) \, dt \right],  \qquad \label{iden:force2}
\end{eqnarray}
for a.a. $x\in (x_0,x_1)$. Moreover, the condition $(A3)$ holds on this segment:
\begin{eqnarray}
    F_{con}\Big(\{t_1\}\times (x_0,x_1) \cap \text{int}\{\partial_t \eta \geq 0\}\Big)= 0.\label{iden:force2a3}
\end{eqnarray}

    \end{enumerate}
\end{thm}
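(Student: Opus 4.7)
The strategy for both parts of Theorem \ref{main3} is to test the weak momentum equation \eqref{momeqweak} against a carefully chosen test function that sharply concentrates near the piece of $\partial\{\eta=0\}$ under consideration, so that the divided differences of $\partial_x\eta$ and $\partial_{tx}\eta$ appearing in \eqref{iden:force} and \eqref{iden:force2} emerge from the derivative of the localized test function. The non-reactivity statements \eqref{iden:forcea3} and \eqref{iden:force2a3} then follow by applying the same family of test functions to the renormalized inequality \eqref{renorm} instead.

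\textbf{Part 1 (graph case).} I would test \eqref{momeqweak} with $\varphi_\delta(t,x)=\psi(t)\rho_\delta(x-f(t))$, where $\psi\in C_c^\infty(a,b)$ and $\rho_\delta(s)=(1-|s|/\delta)^+$ is a unit-height tent of width $2\delta$; since $f\in H^1(a,b)\hookrightarrow C([a,b])$, this composition is Lipschitz and, after a mild spatial smoothing, an admissible test function. The chain rule gives $\partial_t\varphi_\delta=\psi'\rho_\delta-\psi f'\rho_\delta'(x-f(t))$ and $\partial_x\varphi_\delta=\psi\rho_\delta'(x-f(t))$, and the elementary identity
\[
\int_0^L g(x)\,\rho_\delta'(x-f(t))\,dx=-\frac{1}{\delta}\Big(\int_{f(t)}^{f(t)+\delta} g\,dx-\int_{f(t)-\delta}^{f(t)} g\,dx\Big)
\]
converts every pairing against $\rho_\delta'$ into a divided difference. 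As $\delta\to 0$: the $\psi'\rho_\delta$ contribution vanishes because $|\rho_\delta|\le 1$ and its support shrinks; the $\psi f'$-term paired with $\partial_t\eta$ vanishes because $\partial_t\eta\in L^2(0,T;H^1(0,L))\hookrightarrow L^2(0,T;C([0,L]))$ is continuous in $x$, so the associated divided difference tends pointwise to zero and is dominated in $L^2(0,T)$; the two terms in $\partial_{tx}\eta$ and $\partial_x\eta$ reproduce exactly the bracket of \eqref{iden:force}; and $\int\psi\rho_\delta\,dF_{con}$ converges by dominated convergence (using $|\rho_\delta|\le 1$ and $\rho_\delta\to\chi_{\{x=f(t)\}}$ pointwise) to the integral of $\psi$ against $F_{con}$ restricted to the graph. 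Letting $\psi\nearrow\chi_{[a,b]}$ then yields \eqref{iden:force}.

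\textbf{Part 2 (segment case).} A disintegration of the finite Radon measure $F_{con}$ at the time slice $\{t_1\}$ gives $F_{con}|_{\{t_1\}\times(x_0,x_1)}=\delta_{t_1}\otimes\mu$ for some Borel measure $\mu$ on $(x_0,x_1)$. I would then test \eqref{momeqweak} with $\varphi_\delta(t,x)=\psi_\delta(t)\phi(x)$, where $\psi_\delta(t)=(1-|t-t_1|/\delta)^+$ and $\phi\in C_c^\infty(x_0,x_1)$: the $\partial_{tx}\eta\,\psi_\delta\phi'$ and $\partial_x\eta\,\psi_\delta\phi'$ terms vanish in the limit by Cauchy--Schwarz on the shrinking support of $\psi_\delta$; the time-derivative term equals the divided difference $\tfrac{1}{\delta}\int_{t_1-\delta}^{t_1}\int\partial_t\eta\,\phi-\tfrac{1}{\delta}\int_{t_1}^{t_1+\delta}\int\partial_t\eta\,\phi$, whose limit, using $\eta\in W^{1,\infty}(0,\infty;L^2(0,L))$ to make sense of one-sided $L^2$-traces, equals minus the jump appearing in \eqref{iden:force2} tested against $\phi$; and dominated convergence on the right yields $-\int\phi\,d\mu$. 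Arbitrariness of $\phi$ then identifies $\mu=f_{con}(x)\,dx$ with $f_{con}\in L^2(x_0,x_1)$ satisfying \eqref{iden:force2}.

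\textbf{Non-reactivity (A3) and main obstacle.} Both \eqref{iden:forcea3} and \eqref{iden:force2a3} rest on the geometric observation that if a point of $\partial\{\eta=0\}$ lies in $\mathrm{int}\{\partial_t\eta\geq 0\}$, then monotonicity of $\eta$ in $t$ combined with $\eta\geq 0$ and $\eta=0$ at the point forces $\eta\equiv 0$ on a past-in-time piece through the point. For the segment case this kills the past trace $\partial_t\eta(t_1^-,x)$, and the renormalized inequality \eqref{renorm} applied to $b(s)=s^2$ with the same test functions $\psi_\delta\phi$ (whose non-divided-difference terms again vanish) gives the one-sided comparison $\int[(\partial_t\eta)^+(t_1^-,x)]^2\phi\,dx\geq\int[(\partial_t\eta)^+(t_1^+,x)]^2\phi\,dx$, so the future positive trace also vanishes; since the negative part vanishes automatically on $\mathrm{int}\{\partial_t\eta\geq 0\}$, the jump $f_{con}$ is zero there, proving \eqref{iden:force2a3}. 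The graph version \eqref{iden:forcea3} is analogous, replacing $\psi_\delta\phi$ by $\psi\rho_\delta(x-f(t))$ restricted to a subinterval of $(a,b)$ on which the graph stays in $\mathrm{int}\{\partial_t\eta\geq 0\}$. The main technical obstacle throughout is the rigorous identification of the limit $\int\psi\rho_\delta\,dF_{con}$ with the $F_{con}$-mass of the graph: the shrinking strips $\{|x-f(t)|<\delta\}$ may sweep over further pieces of $\partial\{\eta=0\}$ carrying $F_{con}$-mass, and one has to use continuity of $f$, $\supp(F_{con})\subseteq\partial\{\eta=0\}$, and local finiteness of $F_{con}$ to argue that these spurious contributions disappear. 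A parallel subtlety in the (A3) argument is extracting the one-sided pointwise-in-$t$ traces of $(\partial_t\eta)^+$ from \eqref{renorm} under only $L^\infty(L^2)\cap L^2(H^1)$-regularity.
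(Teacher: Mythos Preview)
Your approach to the jump identities \eqref{iden:force} and \eqref{iden:force2}, and to the segment non-reactivity \eqref{iden:force2a3}, is essentially the paper's: tent localizers (in $x$ along the graph, in $t$ across the segment) convert the pairings with $\partial_x\varphi$ or $\partial_t\varphi$ into the desired divided differences, the cross terms vanish by $O(\sqrt{\delta})$ bounds coming from $\partial_{tx}\eta\in L^2$ and $f'\in L^2$, and the segment (A3) conclusion is obtained exactly as you say from the renormalized inequality with $b(s)=s^2$ (this is Theorem~\ref{main2}(2) in the paper).

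The real gap is your treatment of the graph non-reactivity \eqref{iden:forcea3}. You call it ``analogous'' to the segment case, but it is not: for the segment, $F_{con}$ is the \emph{velocity} jump, and the renormalized inequality controls precisely that quantity; for the graph, $F_{con}$ is the \emph{stress} jump \eqref{iden:force}, and testing \eqref{renorm} with $\psi\rho_\delta(x-f(t))$ does not obviously produce sign information on $\partial_x\eta+\partial_{tx}\eta$ across the curve. The paper proceeds quite differently. First, it proves by a geometric argument that if the graph lies in $\mathrm{int}\{\partial_t\eta\geq 0\}$ then $f$ must be \emph{monotone}: if $f(\theta_1)=f(\theta_2)$ with $f$ nonconstant in between, the time-monotonicity of $\eta$ would force an interior point of $\{\eta=0\}$ onto the graph, contradicting $\{(t,f(t))\}\subset\partial\{\eta=0\}$. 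Second, monotonicity makes Theorem~\ref{main2}(1) applicable, so $\partial_t\eta$ has zero weak trace on the graph; since $f\in H^1(a,b)$ this upgrades to $\partial_t\eta(t,f(t))=0$ for a.e.\ $t$. Third, because $\eta\geq 0=\eta(t,f(t))$ and, inside $\mathrm{int}\{\partial_t\eta\geq 0\}$, also $\partial_t\eta\geq 0=\partial_t\eta(t,f(t))$, both $\eta$ and $\partial_t\eta$ attain their minimum along the graph, so each of the four divided differences in the bracket of \eqref{iden:force} has a definite sign and the bracket is $\geq 0$; hence $F_{con}(\text{graph})\leq 0$, which together with $F_{con}\geq 0$ gives \eqref{iden:forcea3}. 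The monotonicity step and the invocation of Theorem~\ref{main2}(1) are the substantive ingredients your sketch is missing.
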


We can represent the characterization of this theorem on the following figure $\ref{figure1}$:

\begin{figure}[h]
    \centering\makebox[\textwidth]{\includegraphics[width=\columnwidth]{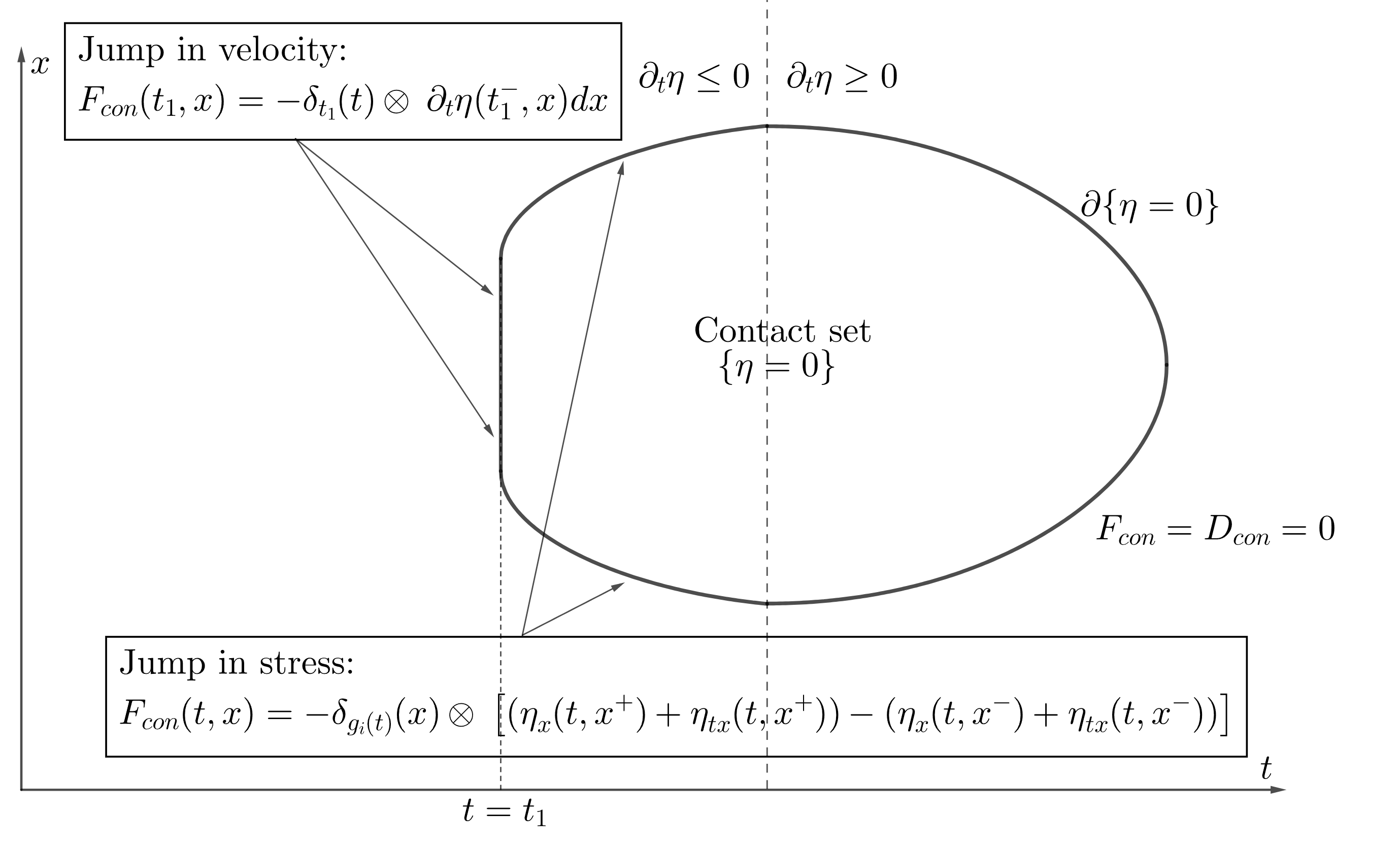}}
    \caption{An illustrative example of the contact set with characterization of contact force.}
    \label{figure1}
\end{figure}
\newpage

\section{Proof of Theorem \ref{main1}}
The solution is constructed as a limit of approximate solutions. 
\subsection{Approximate problem}
Let $\varepsilon>0$, $n\in \mathbb{N}$ and let $V_n= h+\text{span}\{ \sin(k \pi x/l) \}_{k\in \mathbb{N}}$. For a given $a>0$, define $\chi_{a}^-(x)$ to be a smooth non-increasing function such that $\chi_{a}^-(x)=1$ for $x<-a$ and $\chi_{a}^-(x)=0$ for $x>0$. We define the approximate problem as follows. Find $\eta^n \in C^2([0,T]; V_n)$ such that
\begin{eqnarray}\label{app:prob}
    &&\int_0^T\int_0^l \partial_{tt}\eta^n \varphi \, dx \, dt - \int_0^T\int_0^l \partial_{txx}\eta^n \varphi \, dx \, dt - \int_0^T\int_0^l \partial_{xx}\eta^n \varphi \, dx \, dt
    \nonumber \\
    &&+\frac1\varepsilon\int_0^T\int_0^l \chi_{\varepsilon}^-(\eta^n)\chi_{1/n}^-(\partial_t \eta^n) \partial_t \eta^n \varphi \, dx \, dt=0,
\end{eqnarray}
for all $\varphi\in C_c^\infty([0,T); V_n)$. Note that $\chi_{\varepsilon}^-(\eta)\chi_{1/n}^-(\partial_t \eta) \partial_t \eta$ approximates -$\chi_{\{\eta<0\}} (\partial_t\eta)^-$. Here, the approximate initial data $v_{0,n,\varepsilon}$ and $\eta_{0,n,\varepsilon}$ are taken as projections of smooth approximate functions $v_{0,\varepsilon}$ and $\eta_{0,\varepsilon}\geq 0$ onto $V_n$, and $v_{0,\varepsilon}\to v_n$ and $\eta_{0,\varepsilon}\to \eta_0$ in $L^2(0,l)$ and $H_0^1(0,l)$ as $\varepsilon\to 0$, respectively.

\subsection{Approximate solutions and uniform estimates}
It is easy to conclude that this problem has a unique solution $\eta^n$ by standard ODE theory, at least for $T>0$ small enough. To extend the solution to an arbitrary time, we derive the uniform estimates. By formally taking\footnote{The choice of this test function can be rigorously justified by regularizing the test function and then passing to the limit as the regularization parameter tends to zero.} $\varphi = \partial_t \eta \chi_{[0,t]}$ in $\eqref{app:prob}$ for any $t\in (0,T]$, one obtains
\begin{eqnarray}
    &&\frac12\int_0^l |\partial_t \eta^n(t)|^2\, dx \, dt + \frac12\int_0^l |\partial_x \eta^n(t)|^2\, dx \, dt + \int_0^t \int_0^l |\partial_{tx}\eta^n|^2\, dx \, dt \nonumber\\
    &&\quad + \frac1\varepsilon\int_0^t \int_0^l\chi_{\varepsilon}^-(\eta^n)\chi_{1/n}^-(\partial_t \eta^n) (\partial_t \eta^n)^2 \, dx \, dt\nonumber\\
    && =  \frac12\int_0^l |v_{0,n,\varepsilon}|^2\, dx + \frac12\int_0^l |\partial_x \eta_{0,n,\varepsilon}|^2 \, dx, \label{est1}
\end{eqnarray}
Next, we can choose $\varphi = -\partial_{xx} \eta \chi_{[0,t]}$ and $\varphi = \partial_{tt} \eta \chi_{[0,t]}$ in $\eqref{app:prob}$ for any $t\in (0,T]$, which leads to standard improved estimates for the damped wave equation
\begin{eqnarray}
        &&\int_0^t\int_0^l |\partial_{tt}\eta^n|^2\, dx \, dt+   \int_0^t\int_0^l |\partial_{xx} \eta^n|^2\, dx \, dt + \int_0^l |\partial_{tx} \eta^n(t)|^2\, dx \, dt \leq C(\varepsilon), \quad 0<t\leq T \label{impr:est}
\end{eqnarray}
where $C(\varepsilon)$ does not depend on $n$ nor $T$. Since these estimates are time-independent, we can extend the solution to $(0,\infty)$, with the following estimates:
\begin{eqnarray}
    &&||\partial_t \eta^n||_{L^\infty(0,\infty; L^2(0,l))} + ||\partial_x \eta^n||_{L^\infty(0,\infty; L^2(0,l))} + ||\partial_{tx} \eta^n||_{L^2(0,\infty; L^2(0,l))} \leq C, \label{uni:est:1} \\
    &&||\partial_{tt} \eta^n||_{L^2(0,\infty; L^2(0,l))} + ||\partial_{xx} \eta^n||_{L^2(0,\infty; L^2(0,l))} + ||\partial_{tx} \eta^n||_{L^\infty(0,\infty; L^2(0,l))} \leq C(\varepsilon).\label{uni:est:2}
\end{eqnarray}

\subsection{Passing to the limit $n\to\infty$}
Here, we can pass to the limit $n\to\infty$ in the equation $\eqref{app:prob}$ and obtain that the limiting function $\eta^\varepsilon:=\lim_n\eta^n\in H^2(0,\infty; L^2(0,l))\cap L^{\infty}(0,\infty; H^2(0,l))\cap W^{1,\infty}(0,\infty; H^1(0,l))$ satisfies
\begin{eqnarray}
    &&\int_0^\infty\int_0^l \partial_t \eta^\varepsilon \partial_t \varphi\, dx \, dt
    -\int_{0}^\infty \int_0^l \partial_{tx}\eta^\varepsilon \partial_x\varphi \, dx \, dt-\int_0^\infty\int_0^l \partial_{x}\eta^\varepsilon \partial_{x} \varphi \, dx \, dt
    \nonumber\\
    &&=-\int_0^l v_{0,\varepsilon} \varphi(0)\, dx -\int_0^\infty\int_0^l F_{con}^\varepsilon\varphi \, dx \, dt  \qquad  \label{momeqweak:eps}
\end{eqnarray}
holds for all $\varphi \in C_c^\infty([0,\infty)\times (0,l))$, where we have denoted
\begin{eqnarray*}
    F_{con}^\varepsilon:=\frac1\varepsilon \chi_{\varepsilon}^-(\eta^\varepsilon) (\partial_t \eta^\varepsilon)^-
    =-\lim\limits_{n\to\infty}\frac{1}{\varepsilon} \chi_{\varepsilon}^-(\eta^n)\chi_{1/n}^-(\partial_t \eta^n) \partial_t \eta^n.
\end{eqnarray*}
Moreover, $\eta^\varepsilon$ satisfies the energy inequality for any $t>0$
\begin{eqnarray*}
    &&\frac12\int_0^l |\partial_t \eta^\varepsilon(t)|^2\, dx + \frac12\int_0^l |\partial_x \eta^\varepsilon(t)|^2 \, dx+ \int_0^t \int_0^l |\partial_{tx}\eta^\varepsilon|^2\, dx \, dt + \int_0^t \int_0^l \frac1\varepsilon\chi_{\varepsilon}^-(\eta^\varepsilon) ((\partial_t \eta^\varepsilon)^-)^2\, dx \, dt\nonumber\\
    && \leq  \frac12\int_0^l |v_{0,\varepsilon}|^2\, dx + \frac12\int_0^l |\partial_x \eta_{0,\varepsilon}|^2\, dx.
\end{eqnarray*}

\subsection{Uniform estimates in $\varepsilon$}
First, for a given $\varphi \in C_c^\infty([0,\infty)\times [0,l])$, we can test the momentum equation $\eqref{momeqweak:eps}$ with $ \partial_t \eta^\varepsilon \varphi$ by the density argument to obtain
\begin{eqnarray}
    && - \frac12\int_0^\infty\int_0^l |\partial_t \eta^\varepsilon|^2 \partial_t \varphi \, dx \, dt-\frac12\int_0^\infty\int_0^l |\partial_{x}\eta^\varepsilon|^2  \partial_t \varphi \, dx \, dt +\int_{0}^\infty \int_0^l |\partial_{tx}\eta^\varepsilon|^2 \varphi \, dx \, dt \nonumber\\
    &&\quad+\int_0^\infty \int_0^l \frac1\varepsilon\chi_{\varepsilon}^-(\eta^\varepsilon) ((\partial_t \eta^\varepsilon)^-)^2 \varphi  \, dx \, dt +  \int_{0}^\infty \int_0^l \partial_{tx}\eta^\varepsilon  \partial_t\eta^\varepsilon\partial_x\varphi \, dx \, dt + \int_0^\infty\int_0^l \partial_{x}\eta^\varepsilon  \partial_t\eta^\varepsilon  \partial_{x}\varphi \, dx \, dt\nonumber\\
    &&=\frac12\int_0^l |v_{0,\varepsilon}|^2 \varphi(0) \, dx+\frac12\int_0^l |\partial_x \eta_{0,\varepsilon}|^2\varphi(0)\, dx.\label{enineq:eps}
    \end{eqnarray}
    
Next, from the uniform estimates $\eqref{est1}$, we have that $\partial_x \eta^\varepsilon$ is uniformly bounded in $L^\infty(0,\infty; L^2(0,l))$, so for a.a. $t>0$ one has that $||\eta(t,\cdot)||_{C^\alpha(0,l)}\leq C||\eta(t,\cdot)||_{H^1(0,l)}\leq C$ for any $\alpha \in (0,1/2)$, due to embedding of Sobolev spaces. Moreover, $\eta$ is continuous on $[0,\infty)\times[0,l]$ so this estimate holds for all $t\geq 0$. Therefore, since $\eta^\varepsilon(t,0) = \eta^\varepsilon(t,l)=h$, there is a $\delta>0$ depending only on the initial energy such that 
\begin{eqnarray}
    \eta^\varepsilon>\frac{h}2 \quad \text{ on } \quad  [0,\infty)\times([0,\delta]\cup[l-\delta,l]) \label{away:from}
\end{eqnarray}
so
\begin{eqnarray}
     F_{con}^\varepsilon = 0, \quad \text{on } [0,\infty)\times([0,\delta]\cup[l-\delta,l]). \label{away:from2}
\end{eqnarray}
By choosing a non-negative time-independent $\varphi\in C_c^\infty((0,l))$ in $\eqref{momeqweak:eps}$ such that $\varphi = 0$ on $[0,\frac\delta2]\cup[l-\frac\delta2,l]$ and $\varphi=1$ on $[\delta,l-\delta]$ and testing with $\chi_{[0,T]}\varphi$ for any $T>0$, one has
\begin{eqnarray}
    &&\int_0^T\int_0^l F_{con}^\varepsilon \, dx \, dt= \int_0^T\int_0^l F_{con}^\varepsilon \varphi\, dx \, dt \nonumber\\
    &&= \int_0^T\int_0^l \partial_t \eta^\varepsilon \underbrace{\partial_t \varphi}_{=0} dx \, dt- \int_{0}^T \int_0^l \partial_{tx}\eta^\varepsilon \partial_x\varphi\, dx \, dt -\int_0^T\int_0^l \partial_{x}\eta^\varepsilon \partial_{x} \varphi\, dx \, dt
    \nonumber\\
    &&\quad-\int_0^l v_{0,\varepsilon} \varphi(0)\, dx+\int_0^l \partial_t \eta^\varepsilon(T)\varphi(T) \, dx\nonumber\\
    &&\leq C \label{F:con:est}
\end{eqnarray}
where $C$ depends only on the initial energy and $T$. Therefore, since $F_{con}^\varepsilon \geq 0$, one has
\begin{eqnarray*}
    F_{con}^\varepsilon \text{ is uniformly bounded in } L_{loc}^1([0,\infty); L^1(0,l)).
\end{eqnarray*}
Next, in order to prove that the limit of $\eta^\varepsilon$ will be non-negative, we apply the fundamental theorem of calculus onto\footnote{We choose $(\eta^\varepsilon+\varepsilon)^-$ instead of $(\eta^\varepsilon)^-$ due to the presence of the function $\chi_\varepsilon$ in the penalization term.} $-(\eta^\varepsilon+\varepsilon)^-$ which gives us
\begin{eqnarray}
    &&(\eta^\varepsilon(t,x)+\varepsilon)^- = \underbrace{(\eta^\varepsilon(0,x)+\varepsilon)^-}_{=0, ~\text{by construction}}-\int_0^t \chi_{{\{ \eta(s,x) \leq -\varepsilon\}}} (\partial_t \eta^\varepsilon)(s,x) \, ds \nonumber \\
    &&=-\int_0^t \chi_{{\{ \eta(s,x) \leq -\varepsilon\}}} (\partial_t \eta^\varepsilon)^+(s,x) \, ds +\int_0^t \chi_{{\{ \eta(s,x) \leq -\varepsilon\}}} (\partial_t \eta^\varepsilon)^-(s,x) \, ds \nonumber \\
    &&\leq \int_0^t \chi_{{\{ \eta(s,x) \leq-\varepsilon\}}} (\partial_t \eta^\varepsilon)^-(s,x) \, ds.  \label{low:eta:est}
\end{eqnarray}
Thus one obtains 
\begin{eqnarray}\label{zero:bound}
    &&\text{ess sup}_{t\in (0,T)} \int_0^l  |(\eta^\varepsilon+\varepsilon)^-(t,x)| \, dx \nonumber \\
    &&\leq \int_0^T \int_0^l\chi_{{\{ \eta(t,x) \leq -\varepsilon\}}} (\partial_t \eta^\varepsilon)^-(s,x) \, dxds = \varepsilon \int_0^T \int_0^l F_{con}^\varepsilon \, dx \, dt \leq C \varepsilon, 
\end{eqnarray}
where $C$ depends on the initial energy and $T$. Finally, by testing $\eqref{momeqweak:eps}$ with $b'((\partial_t \eta^\varepsilon)^+) \varphi$ by the density argument, where $\varphi \in C_c^\infty([0,T)\times (0,l))$ is non-negative and $b\in C^2 ([0,\infty))$ satisfies $b(0)=b'(0)=0$, $b'(x)\leq cx$ and $0\leq b''(x)\leq c$, we obtain
\begin{eqnarray}
   &&\int_0^T\int_0^l b((\partial_t \eta^\varepsilon)^+) \partial_t \varphi \, dx \, dt- \int_{0}^T \int_0^l \partial_{tx}\eta^\varepsilon b'((\partial_t \eta^\varepsilon)^+)\partial_x\varphi \, dx \, dt \nonumber\\
     && - \int_0^T\int_0^l \partial_{x}\eta^\varepsilon b'((\partial_t \eta^\varepsilon)^+)\partial_{x} \varphi \, dx \, dt -\int_{0}^T \int_0^l \partial_x\eta^\varepsilon\partial_x b'((\partial_t \eta^\varepsilon)^+) \varphi \, dx \, dt \nonumber \\    
     &&= \int_0^T \int_0^l \underbrace{\partial_{tx}\eta^\varepsilon \partial_x b'((\partial_t \eta^\varepsilon)^+)}_{=|\partial_{x} (\partial_t\eta^\varepsilon)^+|^2 b''((\partial_t \eta^\varepsilon)^+)}\varphi\, dx \, dt  -\int_0^l b((v_{0,\varepsilon})^+) \varphi(0) \, dx \label{weak:cont:est}
\end{eqnarray}
since $F_{con}^\varepsilon b'((\partial_t \eta^\varepsilon)^+) = 0$.

\subsection{Passing to the limit $\varepsilon\to 0$}
Here, it is straightforwards to pass to the limit in the equation $\eqref{momeqweak:eps}$. In particular, one has
\begin{eqnarray*}
    \eta^\varepsilon \to \eta,& \quad \text{weakly* in } L^\infty(0,\infty; H^1(0,l)), \\
     \eta^\varepsilon \to \eta,& \quad \text{strongly in } C^{0,\alpha}([0,\infty)\times [0,l]), \\
    \partial_t \eta^\varepsilon \to \partial_t \eta,& \quad \text{weakly* in } L^\infty(0,\infty; L^2(0,l)), \\
     \partial_t \eta^\varepsilon \to \partial_t \eta,& \quad \text{weakly in } L^2(0,\infty; H_0^1(0,l)), \\
     F_{con}^\varepsilon \to F_{con},& \quad \text{weakly in } \mathcal{M}_{loc}^+([0,\infty)\times[0,l]),  
\end{eqnarray*}
for any $0<\alpha<1/2$, so $\eta$ and $F_{con}$ satisfy $\eqref{momeqweak}$, while from $\eqref{zero:bound}$ it follows that $\eta\geq 0$. Moreover, $F_{con}$ cannot be supported in $\text{int}\{\eta>0\}$ by construction, or in $\text{int}\{\eta=0\}$ since the $F_{con}$ has to be zero on this set (because on this set the left-hand side in $\eqref{momeqweak}$ vanishes), so $\text{supp}(F_{con})\subseteq \partial\{\eta=0\}$. \\

Next, since for any fixed $T>0$
\begin{eqnarray*}
    \partial_{tt} \eta^\varepsilon = \partial_{txx}\eta^\varepsilon + \partial_{xx}\eta^\varepsilon + F_{con}^\varepsilon \quad  \text{ is uniformly bounded in } L^1(0,T; H^{-1}(0,l)),
\end{eqnarray*}
by embedding of $L^1(0,l)$ into $H^{-1}(0,l)$, while $\partial_{t} \eta^\varepsilon$ is uniformly bounded in $L^2(0,T; H^1(0,l))$, one obtains by Aubin-Lions lemma that 
\begin{eqnarray}
    \partial_t \eta^\varepsilon \to \partial_t \eta \quad \text{strongly in } L_{loc}^2((0,\infty)\times (0,l)), \label{eta:t:strong}
\end{eqnarray}
since $T$ was arbitrary. \\

Let us prove the strong convergence of $\partial_x \eta^\varepsilon$. For a fixed $k>0$, define the following function
\begin{eqnarray*}
    \psi_k(t):= \begin{cases}
        1-e^{2(t-k)}, & \quad \text{for } 0\leq t< k,\\
        0, & \quad \text{for } k\leq t.
    \end{cases}
\end{eqnarray*}
Note that this is a continuous function supported on $[0,k]$ which also satisfies
\begin{eqnarray}\label{psi:property}
    -\frac12 \partial_t\psi_k(t)+\psi_k(t) = \begin{cases}
        1, & \quad \text{for } 0\leq t< k,\\
        0, & \quad \text{for } k\leq t.
    \end{cases}
\end{eqnarray}
Then, by testing $\eqref{momeqweak:eps}$ with $(\eta^\varepsilon-h) \psi_k$, one has
    \begin{eqnarray}
    &&\int_0^k\int_0^l |\partial_t \eta^\varepsilon|^2\psi_k \, dx \, dt + \int_0^k\int_0^l \partial_t \eta^\varepsilon (\eta^\varepsilon-h) \partial_t\psi_k \, dx \, dt \nonumber \\
    && \quad  -\int_{0}^k \int_0^l \partial_{tx}\eta^\varepsilon \partial_x\eta^\varepsilon \psi_k \, dx \, dt -\int_0^k\int_0^l |\partial_{x}\eta^\varepsilon|^2 \psi_k \, dx \, dt
    \nonumber\\
    &&=-\int_0^l \psi_k(0)|v_{0,\varepsilon}|^2 \, dx -\int_0^k\int_0^l F_{con}^\varepsilon(\eta^\varepsilon -h)\psi_k \, dx \, dt. \qquad  \label{etastrong}
\end{eqnarray}
We can now transform the third and fourth terms
\begin{eqnarray*}
    &&\int_{0}^k \int_0^l \partial_{tx}\eta^\varepsilon \partial_x\eta^\varepsilon \psi_k \, dx \, dt + \int_0^k\int_0^l |\partial_{x}\eta^\varepsilon|^2\psi_k \, dx \, dt \\
    &&=\frac12\int_{0}^k \int_0^l \frac{d}{dt}|\partial_{x}\eta^\varepsilon|^2 \psi_k \, dx \, dt + \int_0^k\int_0^l \partial_{x}\eta^\varepsilon \partial_x\eta^\varepsilon \psi_k  \, dx \, dt\\
   && = \int_{0}^k \int_0^l |\partial_{x}\eta^\varepsilon|^2 
    \left(-\frac12\partial_t\psi_k + \psi_k\right) \, dx \, dt + \int_0^l \psi_k(0)|\partial_x \eta_{0,\varepsilon}|^2 \, dx\\
    &&=\int_{0}^k\int_0^l |\partial_{x}\eta^\varepsilon|^2 \, dx \, dt
     + \int_0^l \psi_k(0)|\partial_x \eta_{0,\varepsilon}|^2  \, dx
\end{eqnarray*}
by $\eqref{psi:property}$, so passing to the limit $\varepsilon\to 0$ in $\eqref{etastrong}$ gives us
\begin{eqnarray*}
    &&\lim\limits_{\varepsilon\to 0}\int_{0}^k\int_0^l |\partial_{x}\eta^\varepsilon|^2 \, dx \, dt\\    
    \nonumber\\
    &&=\int_0^k\int_0^l |\partial_t \eta|^2\psi_k \, dx \, dt
    +\int_0^k\int_0^l \partial_t \eta (\eta-h) \partial_t \psi_k \, dx \, dt \\
    &&\quad +\int_0^l \psi_k(0)|v_{0,\varepsilon}|^2 \, dx -\int_0^k\int_0^l F_{con} h \psi_k \, dx \, dt -\int_0^l \psi_k(0)|\partial_x \eta_{0}|^2 \, dx \qquad  
\end{eqnarray*}
by using $\eqref{eta:t:strong}$ and the fact that $\eta=0$ on $\supp(F_{con})$. We can compare this identity with $\eqref{momeqweak}$ tested with $(\eta-h) \psi_k$ to obtain
\begin{eqnarray*}
    \lim\limits_{\varepsilon\to 0}\int_{0}^k\int_0^l |\partial_{x}\eta^\varepsilon|^2 \, dx \, dt  = \int_{0}^k\int_0^l |\partial_{x}\eta|^2 \, dx \, dt .
\end{eqnarray*}
Since this holds for any finite arbitrary $k>0$, one concludes that 
\begin{eqnarray}
    \partial_x \eta^\varepsilon \to \partial_x \eta, \quad \text{strongly in } L_{loc}^2(0,\infty; L^2(0,l)). \label{eta:x:strong}
\end{eqnarray}

\bigskip

Next, we prove that $(\partial_t \eta)^+$ satisfies the renormalized momentum inequality \eqref{renorm}.
The weak lower semicontinuity of convex superposition operators (see for example \cite[Theorem 10.20]{FeNobook}) applied to the sequence $(\partial_{x} (\partial_t\eta^\varepsilon)^+,b''((\partial_t \eta^\varepsilon)^+))$ gives us
\begin{eqnarray*}
   \int_0^T\int_0^l|\partial_{x} (\partial_t\eta)^+|^2 b''((\partial_t \eta)^+) \varphi \, dx \, dt \leq \lim\limits_{\varepsilon\to 0} \int_0^T\int_0^l|\partial_{x} (\partial_t\eta^\varepsilon)^+|^2 b''((\partial_t \eta^\varepsilon)^+) \varphi \, dx \, dt
\end{eqnarray*}
for any non-negative $\varphi \in C_c^\infty((0,\infty)\times (0,l))$, which combined with the first strong convergence $\eqref{eta:t:strong}$ and the continuity of the function $x\mapsto x^+$ allows us to pass to the limit in the inequality $\eqref{weak:cont:est}$ to obtain $\eqref{renorm}$. \\

It remains to show that $(\eta,F_{con},D_{con})$ satisfy $\eqref{supp:prop:1}-\eqref{moll:prop}$. First, let us show that $\partial_t \eta = 0$ a.e. on $\text{supp} (F_{con})$. Since $\partial_t \eta\in L^\infty(0,\infty; L^2(0,l))$, one has that $\partial_t \eta\in L^2((0,T)\times(0,l))$ for any $T>0$ so $\partial_t \eta(\cdot, x_0) \in L^2(0,T)$ for a.a. $x_0\in (0,l)$. Fix such a $x_0$. Then, $\eta(\cdot, x_0) \in H^1(0,T)$, and since $\eta$ reaches its minimum on $\{ \eta=0\}$, one obtains that $\partial_t \eta(t,x_0) = 0$ a.e. on $\{t \in (0,T): \eta(t, x_0)=0\}$. Since this holds for a.a. $x_0 \in (0,l)$ and since $T>0$ was arbitrary, one obtains that $\partial_t \eta = 0$ a.e. on $\{\eta = 0\} \supseteq \text{supp} (F_{con})$.

\bigskip

Next, since $\eta > 0$ and consequently $F_{con} = 0$ near the boundary of $(0, \infty) \times (0,l)$ due to $\eta_0 \geq c > 0$ and $\eqref{away:from}$, let $\varphi \in C_c^\infty([0, \infty) \times (0,l))$ be such that $\varphi = 0$ sufficiently far from the boundary. By testing the momentum equation with $(\partial_t \eta)^\omega \varphi$, $\omega > 0$, and passing to the limit as $\omega \to 0$, we conclude that the energy balance $\eqref{enineq}$ holds near the boundary.

To analyze the energy balance away from the boundary, we fix a ball $B \subset (0, \infty) \times (0,l)$. The idea is to compare the mollification limit $\omega \to 0$ with the penalization limit $\varepsilon \to 0$ in the energy balance equation. Let $\varphi \in C_c^\infty(B)$. Testing the momentum equation $\eqref{momeqweak}$ with $(\partial_t \eta)^\omega \varphi$ and taking the limit as $\omega \to 0$, we obtain

\begin{eqnarray}
    && \int_B|\partial_{tx}\eta|^2\varphi\, dx \, dt +\int_B \varphi dD_{con} \nonumber\\
    &&= \frac12\int_B |\partial_t \eta|^2 \partial_t \varphi \, dx \, dt+\frac12\int_B |\partial_{x}\eta|^2  \partial_t \varphi\, dx \, dt - \int_B\partial_{tx}\eta  \partial_t\eta\partial_x\varphi\, dx \, dt  - \int_B\partial_{x}\eta  \partial_t\eta  \partial_{x}\varphi \, dx \, dt   \nonumber \\ && \label{conv2}
    \end{eqnarray}
where $D_{con}$ stands for the following weak limit
\begin{eqnarray*}
    \int_B \varphi dD_{con} :=-\lim\limits_{\omega\to 0}\int_B F_{con} (\partial_t \eta)^\omega \varphi \, dx \, dt, 
\end{eqnarray*}
and $(\partial_t \eta)^\omega$ is mollification w.r.t. parameter $\omega>0$ defined precisely in Section $\ref{notation}$. We aim to identify the limit of $F_{con} (\partial_t\eta)^\omega$ on $B$ and show that it is a non-negative measure, and to do this we will show that $|\partial_{tx}\eta^\varepsilon|^2$ can only concentrate on $\text{supp}(D_{con})$ while outside of it will converge strongly in $L^1$. Since $\text{supp}(D_{con})$ is closed by definition, for any $(t,x)\in B$ such that $(t,x)\not\in \text{supp}(D_{con})$, there is a ball $B\supseteq B'\ni (t,x)$ such that $B'\cap \text{supp}(D_{con})=\emptyset$ so for any non-negative $\varphi \in C_c^\infty(B')$ one has that
\begin{eqnarray}
    && \int_{B'} |\partial_{tx}\eta|^2\varphi\, dx \, dt \nonumber\\
    &&= \frac12\int_{B'} |\partial_t \eta|^2 \partial_t \varphi \, dx \, dt+\frac12\int_{B'} |\partial_{x}\eta|^2  \partial_t \varphi \, dx \, dt - \int_{B'} \partial_{tx}\eta  \partial_t\eta\partial_x\varphi\, dx \, dt  - \int_{B'} \partial_{x}\eta  \partial_t\eta  \partial_{x}\varphi \, dx \, dt, \nonumber \\ && \label{conv3}
    \end{eqnarray}
while testing $\eqref{momeqweak:eps}$ with $\partial_t \eta^\varepsilon\varphi$ by the density argument and passing to the limit $\varepsilon\to 0$ implies
\begin{eqnarray}
    0 &\leq& \lim\limits_{\varepsilon\to 0} \int_{B'} |\partial_{tx}\eta^\varepsilon|^2\varphi \, dx \, dt +\lim\limits_{\varepsilon\to 0} \int_{B'}  F_{con}^\varepsilon (\partial_t \eta^\varepsilon)^-\varphi \, dx \, dt \nonumber\\
    &=& \frac12 \int_{B'} |\partial_t \eta|^2 \partial_t \varphi \, dx \, dt+\frac12 \int_{B'} |\partial_{x}\eta|^2  \partial_t \varphi \, dx \, dt-  \int_{B'} \partial_{tx}\eta  \partial_t\eta\partial_x\varphi \, dx \, dt -  \int_{B'} \partial_{x}\eta  \partial_t\eta  \partial_{x}\varphi \, dx \, dt,\nonumber\\
 \label{conv1}
    \end{eqnarray}
where we used the strong convergences $\eqref{eta:t:strong}$ and $\eqref{eta:x:strong}$. Comparing $\eqref{conv3}$ and $\eqref{conv1}$ implies that 
\begin{eqnarray*}
      \lim\limits_{\varepsilon\to 0} \int_{B'} |\partial_{tx}\eta^\varepsilon|^2\varphi \, dx \, dt +\lim\limits_{\varepsilon\to 0} \int_{B'}  F_{con}^\varepsilon (\partial_t \eta^\varepsilon)^-\varphi \, dx \, dt =  \int_{B'}|\partial_{tx}\eta|^2\varphi \, dx \, dt .
\end{eqnarray*}
However, since
\begin{eqnarray*}
    \int_{B'}|\partial_{tx}\eta|^2\varphi \, dx \, dt \leq \lim\limits_{\varepsilon\to 0} \int_{B'}|\partial_{tx}\eta^\varepsilon|^2\varphi \, dx \, dt  ,
\end{eqnarray*}
by the weak lower semicontinuity, this implies that $\lim\limits_{\varepsilon\to 0} \int_{B'}  F_{con}^\varepsilon (\partial_t \eta^\varepsilon)^-\varphi \, dx \, dt =0$ and $\lim\limits_{\varepsilon\to 0} \int_{B'} |\partial_{tx}\eta^\varepsilon|^2\varphi  \, dx \, dt =   \int_{B'} |\partial_{tx}\eta|^2\varphi  \, dx \, dt $. Therefore, since this holds for any $B'\subseteq B$ such that $B'\cap \text{supp}(D_{con})=\emptyset$ and any non-negative $\varphi \in C_c^\infty(B')$, we can conclude that there exists a measure $\sigma \in \mathcal{M}^+([0,\infty)\times[0,l])$ such that for any $\varphi \in C_c^\infty(B)$ it holds
\begin{eqnarray}
    \lim\limits_{\varepsilon\to 0}\int_B |\partial_{tx}\eta^\varepsilon|^2\varphi \, dx \, dt +\lim\limits_{\varepsilon\to 0}\int_B  F_{con}^\varepsilon (\partial_t \eta^\varepsilon)^-\varphi \, dx \, dt =: \int_B |\partial_{tx}\eta|^2\varphi \, dx \, dt + \int_B \varphi d \sigma. \label{pomocna}
\end{eqnarray}
Now, by using $\eqref{pomocna}$ in $\eqref{conv1}$, one has
\begin{eqnarray*}
     &&\int_B |\partial_{tx}\eta|^2\varphi \, dx \, dt + \int_B \varphi d \sigma\\
     &&= \frac12\int_B |\partial_t \eta|^2 \partial_t \varphi \, dx \, dt+\frac12\int_B |\partial_{x}\eta|^2  \partial_t \varphi \, dx \, dt- \int_B \partial_{tx}\eta  \partial_t\eta\partial_x\varphi \, dx \, dt - \int_B \partial_{x}\eta  \partial_t\eta  \partial_{x}\varphi \, dx \, dt
\end{eqnarray*}
and comparing it to $\eqref{conv2}$, we obtain that $D_{con}=\sigma$ on $B$. Since $B$ was arbitrary, one obtains that $D_{con} = \sigma \in \mathcal{M}^+([0,\infty)\times[0,l])$. This also concludes the proof of $\eqref{enineq}$. \\

Let us now show that $|\text{supp}(D_{con})|=0$. Fix $T>0$. Since passing to the mollification limit $\omega\to0$ gives $(\partial_t \eta)^\omega \to \partial_t \eta$ a.e., by using the theorem of Egorov, for every $\delta>0$, there exists a set $A_\delta$ such that $|(0,T)\times (0,l)\setminus A_\delta| < \delta$ and $(\partial_t \eta)^\omega\to \partial_t \eta$ uniformly on $A_\delta$. Since $\partial_t \eta = 0$ a.e. on $\text{supp} (F_{con})\cap A_\delta$, this means that $|(\partial_t \eta)^\omega|\leq C_{\delta,\omega}$ a.e. on $\text{supp} (F_{con})\cap A_\delta$, where $C_{\delta,\omega}\to 0$ as $\omega\to 0$ for any fixed $\delta>0$. Therefore
\begin{eqnarray*}
    \left|\int_{\text{supp} (F_{con})\cap A_\delta}  F_{con} (\partial_t \eta)^\omega \, dx \, dt \right| \leq C_{\delta,\omega}  \int_{(0,T)\times(0,l)} dF_{con} \leq C_{\delta,\omega} C
\end{eqnarray*}
so
\begin{eqnarray*}
    \lim\limits_{\omega\to 0}\int_{\text{supp} (F_{con})\cap A_\delta} F_{con} (\partial_t \eta)^\omega \, dx \, dt = 0.
\end{eqnarray*}
Passing to the limit $\delta\to 0$ and then $T\to \infty$, the conclusion follows. \\

Finally, in order to show $\text{supp}(D_{con})\cap \text{int}\{\partial_t \eta\geq 0\}=\emptyset$, fix a non-negative $\varphi \in C_c^\infty(\text{int}\{\partial_t \eta\geq 0\})$ and $\omega_0:=\text{dist}(\text{supp} (\varphi),\partial\{\partial_t \eta\geq 0\})$. Then $(\partial_t\eta)^\omega\varphi \geq 0$ for all $\omega<\omega_0$, so
\begin{eqnarray*}
   0\leq \int_{0}^T \int_0^l \varphi d D_{con}= -  \lim\limits_{\omega_0>\omega\to 0}\int_0^T \int_0^l F_{con} (\partial_t\eta)^\omega\varphi \, dx \, dt  \leq 0
\end{eqnarray*}
since $F_{con} \geq 0$, which finishes the proof.

\section{Proof of Theorem $\ref{main2}$}

We introduce the notion of zero trace, which will play a useful role in our analysis. This concept allows us to clearly state and rigorously prove the second main result of this paper.

\begin{mydef}
Let $f:[a,b]\to (0,l)$ such that $f \in W^{1,1}(a,b)$ and let $u\in L^2(a,b; H^1(0,l))$. We say that $u$ has a zero trace on the graph of $f$ with notation $u\in \gamma_{f}^0$ if
    \begin{eqnarray}
         \lim\limits_{\omega\to 0}  \int_f u_\omega\varphi:= \lim\limits_{\omega\to 0}\int_a^b u_\omega(t,f(t))\varphi(t,f(t)) \sqrt{1+f'(t)^2}dt  = 0 \label{aux:prop}
\end{eqnarray}
for any non-negative $\varphi \in C_c^\infty([a,b]\times (0,l))$ and any set of functions $\{u_\omega\}_{\omega>0}$ such that $u_\omega\in C^\infty([a,b]\times [0,l])$ and $u_\omega \to u$ in $L^2(a,b; H^1(0,l))$ as $\omega\to 0$.
\end{mydef}

\begin{rem}
    Note that we require test function to be non-negative. This is used in order to track the change of sign across the graph of $f$. Later, this condition will be removed.
\end{rem}

We now show that this property is equivalent to the zero trace in the following weak sense:
\begin{lem}\label{equiv:lem}
Let $f:[a,b]\to (0,l)$ such that $f \in W^{1,1}(a,b)$ and let $u\in L^2(a,b; H^1(0,l))$. Then $u\in \gamma_{f}^0$ if and only if  
\begin{eqnarray}
    \int_{a}^b \int_{\{f(t)\leq x\leq l\}} u_{x} \varphi \, dx \, dt = - \int_{a}^b \int_{\{f(t)\leq x\leq l\}}u \varphi_x \, dx \, dt \label{prop1}
\end{eqnarray}
if and only if
\begin{eqnarray}
    \int_{a}^b \int_{\{0\leq  x\leq f(t)\}} u_x \varphi \, dx \, dt = - \int_{a}^b \int_{\{0\leq  x\leq f(t)\}}u \varphi_x \, dx \, dt \label{prop2}
\end{eqnarray}
for any non-negative $\varphi \in C_c^\infty([a,b]\times (0,l))$.
\end{lem}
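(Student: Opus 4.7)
The argument splits into two equivalences: $\eqref{prop1} \Leftrightarrow \eqref{prop2}$ and $u \in \gamma_f^0 \Leftrightarrow \eqref{prop1}$.

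For $\eqref{prop1} \Leftrightarrow \eqref{prop2}$, I would simply add the two candidate identities: their sum is
\begin{equation*}
  \int_a^b \int_0^L u_x \varphi \, dx \, dt = -\int_a^b \int_0^L u \varphi_x \, dx \, dt,
\end{equation*}
which is the standard distributional integration-by-parts formula, valid for every $u \in L^2(a,b; H^1(0,L))$ and every $\varphi \in C_c^\infty([a,b]\times (0,L))$ (since $\varphi$ is compactly supported away from $x = 0$ and $x = L$). Hence the two one-sided statements are equivalent: whenever one holds for non-negative $\varphi$, the other follows by subtracting this globally valid full-interval identity.

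For $u \in \gamma_f^0 \Leftrightarrow \eqref{prop1}$, I would fix any smooth sequence $u_\omega \in C^\infty([a,b] \times [0,L])$ with $u_\omega \to u$ in $L^2(a,b; H^1(0,L))$ (such sequences exist by density). Slicewise integration by parts in $x$ on $\{f(t) \leq x \leq L\}$ yields, for any $\varphi \in C_c^\infty([a,b] \times (0,L))$,
\begin{equation*}
  \int_a^b \int_{f(t)}^L (u_\omega)_x \varphi \, dx \, dt + \int_a^b \int_{f(t)}^L u_\omega \varphi_x \, dx \, dt = -\int_a^b u_\omega(t,f(t)) \varphi(t,f(t)) \, dt,
\end{equation*}
the $x = L$ contribution vanishing by compact support. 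The Sobolev embedding $H^1(0,L) \hookrightarrow C([0,L])$ gives continuity of the Nemytskii trace operator, so $u_\omega(\cdot, f(\cdot)) \to u(\cdot, f(\cdot))$ in $L^2(a,b)$ and the left-hand volume integrals converge to their $u$-counterparts. Consequently $\eqref{prop1}$ holds for all non-negative $\varphi$ iff $\lim_{\omega \to 0}\int_a^b u_\omega(t,f(t)) \varphi(t,f(t)) \, dt = 0$ for every such $\varphi$, and, by taking $\varphi(t,x) = \tilde\varphi(t)\psi(x)$ with $\psi \equiv 1$ near the compact set $f([a,b])$, this is in turn equivalent to $u(t, f(t)) = 0$ for a.a. $t \in (a,b)$. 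To bridge with $\eqref{aux:prop}$, observe that the weight $\sqrt{1 + f'(t)^2}$ is strictly positive: if $u(t,f(t)) \neq 0$ on a set of positive measure, a localized non-negative $\varphi$ can be selected to violate $\eqref{aux:prop}$, while conversely, once $u(t,f(t)) = 0$ a.e., one shows that the weighted limit vanishes for every approximating sequence by splitting via Egorov's theorem (uniform smallness of $u_\omega(\cdot,f(\cdot))$ off a set of arbitrarily small measure) and exploiting the absolute continuity of $\int \sqrt{1+f'^2}\, dt$.

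The principal analytic obstacle is precisely the weight $\sqrt{1+f'(t)^2}$, which lies only in $L^1(a,b)$ under the hypothesis $f \in W^{1,1}(a,b)$. Because it is not in $L^2$, one cannot directly deduce $\eqref{aux:prop}$ from the $L^2$-convergence of the traces by duality; the Egorov splitting above is needed to rigorously equate vanishing of the weighted limit with the unweighted a.e. trace condition $u(t,f(t)) = 0$. The equivalence $\eqref{prop1} \Leftrightarrow \eqref{prop2}$ is by contrast immediate.
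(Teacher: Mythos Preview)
Your core strategy---integrate by parts in $x$ on smooth approximations $u_\omega$ and pass to the limit $\omega\to 0$---is exactly the paper's. Your reduction of $\eqref{prop1}\Leftrightarrow\eqref{prop2}$ to the full-interval identity is a pleasant shortcut; the paper instead writes down both one-sided integration-by-parts formulas symmetrically and declares the equivalence ``obvious''. Your intermediate characterization, that $\eqref{prop1}$ for all non-negative $\varphi$ is equivalent to $u(t,f(t))=0$ for a.a.\ $t$, is correct and is implicitly what the paper's limit passage amounts to (for the \emph{unweighted} boundary term).

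You are right that the arc-length weight $\sqrt{1+f'(t)^2}$ in the definition of $\gamma_f^0$ is the delicate point: slicewise integration by parts in $x$ produces the \emph{unweighted} trace integral $\int_a^b u_\omega(t,f(t))\varphi(t,f(t))\,dt$, not the weighted $\int_f u_\omega\varphi$. The paper's proof simply writes the latter as the boundary term and does not comment on the discrepancy, so it does not resolve this either. However, your Egorov patch does not close the gap. From $u_\omega(\cdot,f(\cdot))\to 0$ in $L^2(a,b)$ together with $\sqrt{1+f'^2}\in L^1\setminus L^2$ one cannot in general conclude that $\int_a^b u_\omega(t,f(t))\varphi(t,f(t))\sqrt{1+f'(t)^2}\,dt\to 0$: on the small Egorov exceptional set you have no $L^\infty$ control of $u_\omega(\cdot,f(\cdot))$, so absolute continuity of the weight alone cannot bound the product. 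Concretely, if $f'(t)\sim |t-t_0|^{-3/4}$ near an interior point $t_0$, one can add to any approximating sequence a smooth perturbation $v_\omega$ with $\|v_\omega\|_{L^2(a,b;H^1(0,L))}\to 0$ whose trace on the graph behaves like $\omega^{-1/4}\chi_{(t_0,t_0+\omega)}$, for which the weighted integral stays bounded away from zero. Thus either the definition of $\gamma_f^0$ is meant without the weight---in which case your argument and the paper's coincide and are complete---or the equivalence as literally stated needs an additional ingredient that neither proof supplies.
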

\begin{proof}
Let $\omega>0$ and let $u_\omega \in C^\infty([a,b]\times [0,l])$ such that $u_\omega \to u$ in $L^2(a,b; H^1(0,l))$ as $w\to 0$. By integration by parts
\begin{eqnarray*}
    \int_{a}^b \int_{\{f(t)\leq x\leq f(t)+\delta\}} \partial_x u_\omega \varphi \, dx \, dt = - \int_{a}^b \int_{\{f(t)\leq x\leq f(t)+\delta\}}u_\omega \partial_x \varphi\, dx \, dt + \int_f u_\omega\varphi
\end{eqnarray*}
and 
\begin{eqnarray*}
   \int_{a}^b \int_{\{f(t)-\delta\leq  x\leq f(t)\}} \partial_x u_\omega \varphi \, dx \, dt = - \int_{a}^b \int_{\{f(t)-\delta\leq  x\leq f(t)\}}u_\omega \partial_x \varphi \, dx \, dt + \int_f u_\omega\varphi 
\end{eqnarray*}
so by passing to the limit $\omega\to 0$ we obviously obtain the desired equivalence.
\end{proof}

\begin{lem}\label{zero:trace:lemma}
     Let $u\in L^2(a,b; H^1(0,l))$ and $f:[a,b]\to [0,l]$ with $f\in W^{1,1}(a,b)$. Assume that for any fixed non-negative $\varphi \in C^\infty([a,b]\times[0,l])$, there exists $\delta_0>0$ such that for any $0<\delta<\delta_0$
\begin{eqnarray*}
    \int_a^b \int_{\{f(t)-\delta<x<f(t)\}} u \varphi \, dx \, dt \leq o(\delta), \quad \text{and }  \quad \int_a^b \int_{\{\{f(t)<x<f(t)+\delta\}} u\varphi \, dx \, dt \geq o(\delta),
\end{eqnarray*}
where $\frac{o(\delta)}{\delta}\to 0$ as $\delta \to 0$. Then $u \in \gamma_f^0$.
\end{lem}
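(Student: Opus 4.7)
The plan is to reduce to the equivalent characterization of $\gamma_f^0$ furnished by Lemma \ref{equiv:lem} and then build the required identity from two Lipschitz cut-offs that approximate $\chi_{\{x>f(t)\}}$ with transition regions placed either to the right or to the left of the graph of $f$. The asymmetric one-sided strip bounds in the hypothesis will then produce matching one-sided inequalities that combine to yield equality.

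Fix a non-negative $\varphi\in C_c^\infty([a,b]\times(0,L))$ and, for $\delta>0$, introduce the Lipschitz cut-offs
\[
\psi_\delta^R(t,x)=\rho_\delta^+(x-f(t)),\qquad \psi_\delta^L(t,x)=\rho_\delta^-(x-f(t)),
\]
where $\rho_\delta^+$ equals $0$ on $(-\infty,0]$, is linear on $[0,\delta]$, and equals $1$ on $[\delta,\infty)$, whereas $\rho_\delta^-$ equals $0$ on $(-\infty,-\delta]$, is linear on $[-\delta,0]$, and equals $1$ on $[0,\infty)$. Since $u(t,\cdot)\in H^1(0,L)$ for a.e.\ $t$ and $\varphi\psi_\delta^{R/L}$ is, slice by slice, a Lipschitz function of $x$ compactly supported in $(0,L)$, one-dimensional integration by parts in $x$ together with Fubini yields
\[
\int_a^b\!\!\int_0^L (u_x\varphi+u\varphi_x)\,\psi_\delta^{R/L}\,dx\,dt=-\int_a^b\!\!\int_0^L u\,\varphi\,\partial_x\psi_\delta^{R/L}\,dx\,dt.
\]
Since $\partial_x\psi_\delta^R=\tfrac{1}{\delta}\chi_{[f(t),f(t)+\delta]}$ and $\partial_x\psi_\delta^L=\tfrac{1}{\delta}\chi_{[f(t)-\delta,f(t)]}$, the right-hand sides are exactly the strip averages controlled by the hypothesis.

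Passing to $\delta\to 0$, dominated convergence (using $u,u_x\in L^2$, $\varphi,\varphi_x\in L^\infty$, and $\psi_\delta^{R/L}\to\chi_{\{x>f(t)\}}$ a.e., since the graph of $f$ has planar measure zero) shows that both left-hand sides converge to $\int_a^b\int_{f(t)}^L(u_x\varphi+u\varphi_x)\,dx\,dt$. The bound $\int_a^b\int_{f(t)}^{f(t)+\delta}u\varphi\leq$ wait, actually $\geq o(\delta)$ forces the right-hand side with $\psi_\delta^R$ to satisfy $\limsup_{\delta\to 0}\leq 0$, whereas the bound $\int_a^b\int_{f(t)-\delta}^{f(t)}u\varphi\leq o(\delta)$ forces the right-hand side with $\psi_\delta^L$ to satisfy $\liminf_{\delta\to 0}\geq 0$. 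These two one-sided conclusions pin the common limit to zero, and Lemma \ref{equiv:lem} then delivers $u\in\gamma_f^0$. The main technical point is exactly this asymmetry: the hypothesis delivers only one-sided control, so a single approximation of $\chi_{\{x>f(t)\}}$ cannot suffice; one must carefully pair a right-side transition (giving the upper inequality) with a left-side transition (giving the lower inequality) in order to recover full equality from strip bounds of $o(\delta)$ type.
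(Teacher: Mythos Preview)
Your proof is correct but follows a genuinely different route from the paper. The paper works directly with the definition of $\gamma_f^0$: it mollifies $u$ to $u_\omega$, chooses $\omega=\omega(\delta)$ so that the strip integrals of $u_\omega\varphi$ inherit the $o(\delta)$ bounds, applies the mean-value theorem to locate shifted curves $f-\theta_1$ and $f+\theta_2$ on which the line integrals satisfy \eqref{some:est}, and then uses the $L^2(a,b;H^1)$ regularity to compare line integrals over nearby curves via a $C\sqrt{\delta}$ bound, finally obtaining $\bigl|\int_f u_\omega\varphi\bigr|\le o(\delta)/\delta+C\sqrt{\delta}$. You instead invoke the equivalent characterization from Lemma~\ref{equiv:lem} and verify identity \eqref{prop1} directly: two Lipschitz approximations of $\chi_{\{x>f(t)\}}$, one with transition layer to the right of the graph and one to the left, convert the one-sided strip hypotheses into the two inequalities $\le 0$ and $\ge 0$ for the common DCT limit $\int_a^b\int_{f(t)}^L(u_x\varphi+u\varphi_x)$. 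Your argument is more elementary---no mollification, no mean-value theorem, no $\sqrt{\delta}$ interpolation---at the price of relying on Lemma~\ref{equiv:lem}; the paper's argument is self-contained at the level of the definition and, as a by-product, yields an explicit rate for the curve integral of the mollified function. (A minor stylistic point: the mid-sentence self-correction ``$\leq$ wait, actually $\geq o(\delta)$'' should be cleaned up, but the logic that follows is sound.)
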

\begin{proof}
For a given $\delta>0$ and a fixed non-negative $\varphi \in C^\infty([a,b]\times[0,l])$, the goal is to find $\omega = \omega(\delta)>0$ such that
\begin{eqnarray*}
     \left|\int_f u_\omega\varphi \right| \leq \alpha(\delta), \quad  \text{ where } \alpha(\delta) \to 0 \text{ as } \delta\to 0.
\end{eqnarray*}
By the strong convergence $u_\omega\to u$ in $L^2(a,b; H^1(0,l))$, one also has that $u_\omega\to u$ in $L^1((a,b)\times(0,l))$ so the integrals 
\begin{eqnarray*}
    \int_a^b \int_{\{f(t)-\delta<x<f(t)\}} u_\omega\varphi \, dx \, dt \quad \text{and} \quad \int_a^b \int_{\{f(t)<x<f(t)+\delta\}}u_\omega\varphi \, dx \, dt
\end{eqnarray*}
converge as well. Therefore, there exists $\omega=\omega(\delta)>0$ such that, say 
\begin{eqnarray*}
    &&\int_a^b \int_{\{f(t)-\delta<x<f(t)\}} u_\omega\varphi \, dx \, dt - \int_a^b \int_{\{f(t)-\delta<x<f(t)\}} u\varphi \, dx \, dt \leq \delta^2, \\
    &&\int_a^b \int_{\{f(t)<x<f(t)+\delta\}} u_\omega\varphi \, dx \, dt - \int_a^b \int_{\{f(t)<x<f(t)+\delta\}} u\varphi \, dx \, dt \geq -\delta^2,
\end{eqnarray*}
so
\begin{eqnarray*}
    \int_a^b \int_{\{f(t)-\delta<x<f(t)\}} u_\omega\varphi \, dx \, dt \leq \underbrace{o(\delta)+\delta^2}_{=o(\delta)} \quad \text{and} \quad \int_a^b \int_{\{f(t)<x<f(t)+\delta\}}u_\omega\varphi \, dx \, dt \geq \underbrace{o(\delta)- \delta^2}_{=o(\delta)}.
\end{eqnarray*}
By the Mean-value theorem applied to $x\mapsto \int_{a}^b (u_\omega \varphi)(f(t)+x,t)\, dt$ we have
\begin{eqnarray*}
    \int_a^b \int_{\{f(t)-\delta<x<f(t)\}} u_\omega\varphi \, dx \, dt = \delta\int_{f-\theta_1} u_\omega\varphi \quad \text{and} \quad \int_a^b \int_{\{f(t)<x<f(t)+\delta\}} u_\omega\varphi \, dx \, dt = \delta\int_{f+\theta_2} u_\omega\varphi 
\end{eqnarray*}
where $0\leq \theta_1,\theta_2\leq \delta$, $\theta_1,\theta_2\in\R$. This implies
\begin{eqnarray}
    \int_{f-\theta_1} u_\omega\varphi \leq \frac{o(\delta)}{\delta} \quad \text{and} \quad \int_{f+\theta_2} u_\omega\varphi \geq \frac{o(\delta)}\delta \label{some:est}
\end{eqnarray}
so calculating
\begin{eqnarray*}
     &&\int_{f+\theta_2} u_\omega\varphi - \int_{f-\theta_1} u_\omega\varphi \\
     && = \int_{a}^b \int_{\{f(t)-\theta_1 \leq x \leq f(t)+\theta_2 \}} \partial_x (u_\omega\varphi) \, dx \, dt \\
     &&\leq (b-a)^{1/2} |\theta_2 +\theta_1|^{1/2} ||u_\omega \varphi||_{L^2(a,b; H^1(0,l))} \\
    && \leq C \sqrt{\delta}
\end{eqnarray*}
one obtains
\begin{eqnarray*}
    \int_{f+\theta_2} u_\omega\varphi \leq \int_{f-\theta_1} u_\omega\varphi + C\sqrt\delta \leq \frac{o(\delta)}\delta+C\sqrt{\delta}, \quad \text{ and } \quad \int_{f-\theta_1} u_\omega\varphi \geq \int_{f+\theta_2} u_\omega \varphi - C\sqrt\delta \geq \frac{o(\delta)}\delta-C\sqrt{\delta}.
\end{eqnarray*}
Consequently
\begin{eqnarray*}
    &&\int_f u_\omega \varphi = \int_{f+\theta_2}u_\omega \varphi - \int_{a}^b \int_{f(t)\leq x \leq f(t)+\theta_2} \partial_x (u_\omega\varphi) \, dx \, dt \\
    &&\leq  \int_{f+\theta_2}u_\omega\varphi + (b-a)^{1/2} \theta_2^{1/2} ||u_\omega\varphi ||_{L^2(a,b; H^1(0,l))}\\
     && \leq \frac{o(\delta)}\delta+ C\sqrt\delta
     \end{eqnarray*}
and similarly
\begin{eqnarray*}
     &&\int_f u_\omega\varphi = \int_{f-\theta_1}u_\omega\varphi + \int_{a}^b \int_{f(t)-\theta_1 \leq x \leq f(t)} \partial_x (u_\omega\varphi) \, dx \, dt \\
     &&\geq  \int_{f-\theta_1}u_\omega\varphi - C\theta_1^{1/2} \\
     &&\geq \frac{o(\delta)}\delta -C\sqrt\delta 
\end{eqnarray*}
which gives us the desired conclusion.

\end{proof}

\subsection{Statement 1}
Let $f:[a,b]\to (0,l)$ with $f\in W^{1,1}(a,b)$ be given and w.l.o.g. non-decreasing. Fix a non-negative function $\varphi \in C_c^\infty([a,b]\times (0,l))$ and $\delta_0$ small enough so that $\delta_0< f(t)<l-\delta_0$ on $[a,b]$. Then, for any $0<\delta<\delta_0$ we calculate
\begin{eqnarray*}
     &&\int_a^b \int_{\{f(t)<x<f(t)+\delta\}} \partial_t \eta \varphi \, dx \, dt = \\
     && \underbrace{\int_a^b \eta(t,f(t))\varphi(t,f(t)) \frac{f'(t)}{\sqrt{1+|f'(t)|^2}} \, dt}_{=0} - \underbrace{\int_a^b \eta(t,f(t)+\delta) \varphi(t,f(t))\frac{f'(t)}{\sqrt{1+|f'(t)|^2}} \, dt}_{\geq 0}\\
     &&\quad + \int_{f(b)}^{f(b)+\delta} \eta(b,x)\varphi(b,x)\, dx - \int_{f(a)}^{f(a)+\delta} \eta(b,x)\varphi(a,x)\, dx + \int_a^b \int_{\{f(t)<x<f(t)+\delta\}}\eta  \partial_t \varphi \, dx \, dt,
\end{eqnarray*}
since the graph of $f$ is contained in $\partial\{\eta=0\}$. Next, due to embedding
\begin{eqnarray*}
    \eta\in H^1(a,b; H^1(0,l)) \hookrightarrow C^{0,\alpha}([a,b] \times [0,l]), \quad \text{ for any } 0<\alpha<1/2,
\end{eqnarray*}
one has
\begin{eqnarray*}
    \int_{f(b)}^{f(b)+\delta} \eta(b,x)\varphi(b,x)\, dx \leq C \int_{f(b)}^{f(b)+\delta} |x-f(b)|^\alpha\,  dx \leq C \delta^{1+\alpha}
\end{eqnarray*}
since $\eta(b,f(b))=0$, and similarly for $\int_{f(a)}^{f(a)+\delta} \eta(b,x)\, dx$, while the last term is trivially bounded by $C \delta^{1+\alpha}$. Therefore, one obtains 
\begin{eqnarray*}
    \int_a^b \int_{\{f(t)<x<f(t)+\delta\}} \partial_t \eta \varphi \, dx \, dt \leq o(\delta),
\end{eqnarray*}
and analogously 
\begin{eqnarray*}
    \int_a^b \int_{\{f(t)-\delta<x<f(t)\}} \partial_t \eta \varphi \, dx \, dt \geq o(\delta) .
\end{eqnarray*}
Thus, Lemma $\ref{zero:trace:lemma}$ and Lemma $\ref{equiv:lem}$ give us that $\eqref{par:int:1}$ and $\eqref{par:int:2}$ hold for any non-negative test function $\varphi \in C_c^\infty([a,b]\times(0,l))$. By density, space of non-negative $\varphi \in L^2(a,b; H_0^1(0,l))$ is dense is the space of non-negative functions in $L^2(a,b; H_0^1(0,l))$. Therefore, if an arbitrary $\varphi\in L^2(a,b; H_0^1(0,l))$ is given, then $\eqref{par:int:1}$ and $\eqref{par:int:2}$ hold for $\varphi^+$ and $\varphi^-$ which are both non-negative and belong to $L^2(a,b; H_0^1(0,l))$, so by taking the difference of identities $\eqref{par:int:1}$ and $\eqref{par:int:2}$ tested by $\varphi^+$ and $\varphi^-$, respectively, we obtain that $\eqref{par:int:1}$ and $\eqref{par:int:2}$ hold for any test function $\varphi\in L^2(a,b; H_0^1(0,l))$.

\subsection{Statement 2}
In order to show the property $\eqref{vanish:vel}$, in $\eqref{renorm}$ we choose $b(x)=x^2$, so for any  $\varphi\in C_c^\infty((0,\infty)\times (0,l))$ one can bound 
\begin{eqnarray*}
    &&-\int_0^\infty\int_0^l ((\partial_t \eta)^+)^2 \partial_t \varphi \, dx \, dt \leq - 2\int_{0}^\infty \int_0^l \partial_{tx}\eta (\partial_t \eta)^+\partial_x\varphi \, dx \, dt+2\int_0^\infty\int_0^l|\partial_{x} (\partial_t\eta)^+|^2 \varphi  \, dx \, dt \nonumber\\
     && -2 \int_0^\infty\int_0^l \partial_{x}\eta (\partial_t \eta)^+\partial_{x} \varphi \, dx \, dt-2\int_{0}^\infty \int_0^l \partial_x\eta\partial_x (\partial_t \eta)^+ \varphi \, dx \, dt \nonumber \\    
     &&\leq C ||\varphi ||_{L^2(0,\infty; W^{1,\infty}(0,l))}.
\end{eqnarray*}
By the density argument, we can choose $\varphi(t,x) = \psi_\delta(t)\phi(x)$ where $0\leq \phi\in C_c^\infty((x_0,x_1))$ and
\begin{eqnarray*}
    \psi_\delta(t)=\begin{cases}
        0,& t\leq t_1-\delta, \\
        \frac1\delta(t-(t_1-\delta)),& t_1-\delta\leq t \leq t_1, \\
        -\frac1\delta(t-(t_1+\delta)), & t_1\leq t \leq t_1+\delta,\\
         0,& t> t_1+\delta,
    \end{cases}
\end{eqnarray*}
and then passing to the limit $\delta\to 0$ (note that $||\psi_\delta(t)\phi(x)||_{L^2(0,\infty; W^{1,\infty}(0,l))} \to 0$), one has
\begin{eqnarray}
     \limsup\limits_{\delta\to 0} \frac1\delta\int_{t_1}^{t_1+\delta}\int_{x_0}^{x_1} ((\partial_t \eta)^+(t,x))^2\phi(x) \, dx \, dt     \leq \liminf\limits_{\delta\to 0} \frac1\delta\int_{t_1-\delta}^{t_1}\int_{x_0}^{x_1}((\partial_t \eta)^+(t,x))^2\phi(x) \, dx \, dt=0, \label{pos:semi:cont}
\end{eqnarray}
since the integral
\begin{eqnarray*}
    \int_{t_1-\delta}^{t_1}\int_{x_0}^{x_1}((\partial_t \eta)^+(t,x))^2\phi(x) \, dx \, dt
\end{eqnarray*}
vanishes for $\delta$ small enough by assumption. On the other hand
\begin{eqnarray*}
  &&0\leq  \frac1\delta\int_{t_1}^{t_1+\delta}\int_{x_0}^{x_1} (\partial_t \eta)^+(t,x)\phi(x) \, dx \, dt\\
  &&\leq \frac1\delta \left(\int_{t_1}^{t_1+\delta}\int_{x_0}^{x_1} ((\partial_t \eta)^+(t,x))^2\phi(x) \, dx \, dt\right)^{\frac12} \underbrace{\left(\int_{t_1}^{t_1+\delta}\int_{x_0}^{x_1} \phi(x) \, dx \, dt\right)^{\frac12}}_{\leq C\sqrt{\delta}} \\
  &&\leq \frac{C}{\sqrt{\delta}} \left(\int_{t_1}^{t_1+\delta}\int_{x_0}^{x_1} ((\partial_t \eta)^+(t,x))^2\phi(x)\, dx \, dt\right)^{\frac12}\\
  &&= C \left(\frac{1}{\delta}\int_{t_1}^{t_1+\delta}\int_{x_0}^{x_1} ((\partial_t \eta)^+(t,x))^2\phi(x)\, dx \, dt\right)^{\frac12}
\end{eqnarray*}
by the H\"{o}lder inequality, so one has
\begin{eqnarray*}
     \lim\limits_{\delta\to 0} \frac1\delta\int_{t_1}^{t_1+\delta}\int_{x_0}^{x_1}(\partial_t \eta(t,x))^+\phi(x) \, dx \, dt = 0.
\end{eqnarray*}
Finally, since
\begin{eqnarray*}
    && \frac1\delta\underbrace{\int_{x_0}^{x_1} \eta(t_1+\delta,x)\phi(x)\, dx}_{\geq 0} -\frac1\delta\underbrace{\int_{x_0}^{x_1} \eta(t_1,x)\phi(x) \, dx}_{=0}=\frac1\delta\int_{t_0}^{t_1+\delta}\int_{x_0}^{x_1}\partial_t \eta(t,x)\phi(x)\, dx \, dt \\
    &&= \frac1\delta\int_{t_1}^{t_1+\delta}\int_{x_0}^{x_1}(\partial_t \eta(t,x))^+\phi(x)\, dx \, dt - \frac1\delta\int_{t_1}^{t_1+\delta}\int_{x_0}^{x_1}(\partial_t \eta(t,x))^-\phi(x)\, dx \, dt \\
    && \leq \frac1\delta\int_{t_1}^{t_1+\delta}\int_{x_0}^{x_1}(\partial_t \eta(t,x))^+\phi(x)\, dx \, dt
\end{eqnarray*}
implies
\begin{eqnarray*}
    0\leq \frac1\delta\int_{t_1}^{t_1+\delta}\int_{x_0}^{x_1}\partial_t \eta(t,x)\phi(x)\, dx \, dt \leq \frac1\delta\int_{t_1}^{t_1+\delta}\int_{x_0}^{x_1}(\partial_t \eta(t,x))^+\phi(x)\, dx \, dt
\end{eqnarray*}
by passing to the limit $\delta\to 0$ we obtain that the property $\eqref{vanish:vel}$ holds for any non-negative $\phi\in C_c^\infty((x_0,x_1))$. Next, for any non-negative $\phi \in L^2(x_0,x_1)$, there is a sequence of non-negative $\phi_\omega\in C_c^\infty((x_0,x_1))$ such that $\phi_\omega \to \phi$ in $L^2(x_0,x_1)$. Thus, by writing
\begin{eqnarray}
    &&\frac1\delta\int_{t_1}^{t_1+\delta}\int_{x_0}^{x_1}\partial_t \eta(t,x)\phi(x) \, dx \, dt \nonumber\\
    &&= \frac1\delta\int_{t_1}^{t_1+\delta}\int_{x_0}^{x_1}\partial_t \eta(t,x)(\phi(x)-\phi_\omega(x)) \, dx \, dt+ \frac1\delta\int_{t_1}^{t_1+\delta}\int_{x_0}^{x_1}\partial_t \eta(t,x)\phi_\omega(x) \, dx \, dt, \label{sum}
\end{eqnarray}
and noticing
\begin{eqnarray*}
    &&\int_{t_1}^{t_1+\delta}\int_{x_0}^{x_1}\partial_t \eta(t,x)(\phi(x)-\phi_\omega(x)) \, dx \, dt \\
    &&\leq \delta||\partial_t \eta||_{L^\infty(0,T; L^2(x_0,x_1))} ||\phi(x)-\phi_\omega(x)||_{L^2(x_0,x_1)} \leq C\delta ||\phi(x)-\phi_\omega(x)||_{L^2(x_0,x_1)}, 
\end{eqnarray*}
one can pass to the limit $\delta\to 0$ in $\eqref{sum}$
\begin{eqnarray*}
    &&\left|\lim\limits_{\delta\to 0}\frac1\delta\int_{t_1}^{t_1+\delta}\int_{x_0}^{x_1}\partial_t \eta(t,x)\phi(x) \, dx \, dt \right|\\
    &&= \left|\lim\limits_{\delta\to 0}\frac1\delta\int_{t_1}^{t_1+\delta}\int_{x_0}^{x_1}\partial_t \eta(t,x)(\phi(x)-\phi_\omega(x)) \, dx \, dt \right|\leq C ||\phi(x)-\phi_\omega(x)||_{L^2(x_0,x_1)}.
\end{eqnarray*}
 Now, by passing to the limit $\omega\to 0$ one has that $\eqref{vanish:vel}$ holds for any non-negative $\phi\in L^2(x_0,x_1)$, which can trivially be extended to any $\phi\in L^2(x_0,x_1)$. This finishes the proof.

\section{Proof of Theorem $\ref{main3}$}

Let $\varepsilon,\delta>0$, define 
\begin{eqnarray*}
   && \varphi_\varepsilon(t,x):=\begin{cases}
        1-\frac{x-f(t)}{\varepsilon},& \quad \text{for } a< t < b \text{ and } f(t)\leq x\leq f(t)+\varepsilon,\\
        1-\frac{f(t)-x}{\varepsilon},& \quad \text{for } a< t < b \text{ and } f(t)-\varepsilon\leq x\leq f(t), \\
        0,& \quad \text{elsewhere},
    \end{cases}\\
   && \psi_\delta(t):= \begin{cases}
        \frac{t-a}{\delta},& \quad \text{for } a\leq t \leq a+\delta, \\
        \frac{b-t}{\delta},& \quad \text{for } b-\delta\leq t \leq b, \\
         1,& \quad \text{for } a+\delta < t< b-\delta, \\
           0,& \quad \text{elsewhere},
    \end{cases}
\end{eqnarray*}
and test $\eqref{momeqweak}$ with $\varphi_\varepsilon\psi_\delta$ by the density argument and pass to the limit $\delta\to 0$ to obtain
\begin{eqnarray}
    && \int_a^b\int_{f(t)-\varepsilon}^{f(t)+\varepsilon} \varphi_\varepsilon dF_{con}   \nonumber \\
    &&=-\int_a^b\int_{f(t)-\varepsilon}^{f(t)+\varepsilon} \partial_t \eta \partial_t \varphi_\varepsilon \, dx \, dt +\int_a^b\int_{f(t)-\varepsilon}^{f(t)+\varepsilon} \partial_{tx}\eta \partial_x\varphi_\varepsilon  \, dx \, dt+\int_a^b\int_{f(t)-\varepsilon}^{f(t)+\varepsilon} \partial_{x}\eta \partial_{x} \varphi_\varepsilon \, dx \, dt\nonumber\\
    &&+\int_{f(b)-\varepsilon}^{f(b)+\varepsilon} R(x) \varphi_\varepsilon(f(b),x)\, dx - \int_{f(a)-\varepsilon}^{f(a)+\varepsilon} L(x) \varphi_\varepsilon(f(a),x)\, dx, \label{F:est}
\end{eqnarray}
where last two terms are defined as the weak limits
\begin{eqnarray*}
    &&\int_{f(b)-\varepsilon}^{f(b)+\varepsilon} R(x) \varphi_\varepsilon(f(b),x))\, dx :=\lim\limits_{\delta\to 0} \frac1{\delta}\int_{a}^{a+\delta}\int_{f(b)-\varepsilon}^{f(b)+\varepsilon} \partial_t\eta(t,x) \varphi_\varepsilon(t,x)\, dx \, dt,\\
     &&\int_{f(a)-\varepsilon}^{f(a)+\varepsilon} L(x) \varphi_\varepsilon(f(a),x))\, dx :=\lim\limits_{\delta\to 0} \frac1{\delta}\int_{b-\delta}^{b}\int_{f(a)-\varepsilon}^{f(a)+\varepsilon} \partial_t\eta(t,x) \varphi_\varepsilon(t,x)\, dx \, dt,
\end{eqnarray*}
which exist by boundedness of $\partial_t \eta$ in $L^\infty(0,T; L^2(0,l))$. First, we calculate
\begin{eqnarray*}
    &&\int_a^b\int_{f(t)-\varepsilon}^{f(t)+\varepsilon}\partial_x \eta \partial_x\varphi_\varepsilon \, dx \, dt = \frac1\varepsilon\int_a^b \int_{f(t)-\varepsilon}^{f(t)} \partial_x \eta \, dx \, dt - \frac1\varepsilon\int_a^b \int_{f(t)}^{f(t)+\varepsilon} \partial_x \eta \, dx \, dt,\\
     &&\int_a^b\int_{f(t)-\varepsilon}^{f(t)+\varepsilon}\partial_{tx} \eta \partial_x\varphi_\varepsilon \, dx \, dt = \frac1\varepsilon\int_a^b \int_{f(t)-\varepsilon}^{f(t)} \partial_{tx} \eta \, dx \, dt - \frac1\varepsilon\int_a^b \int_{f(t)}^{f(t)+\varepsilon} \partial_{tx} \eta \, dx \, dt.
\end{eqnarray*}
Next
\begin{eqnarray*}
    &&\int_a^b\int_{f(t)-\varepsilon}^{f(t)+\varepsilon} \partial_t \eta \partial_t \varphi_\varepsilon \, dx \, dt =  -\frac{1}{\varepsilon}\int_a^b\int_{f(t)-\varepsilon}^{f(t)} \partial_t \eta \partial_t f \, dx \, dt + \frac{1}{\varepsilon}\int_a^b\int_{f(t)}^{f(t)+\varepsilon} \partial_t \eta \partial_t f \, dx \, dt \\
    &&= -\int_a^b \partial_t \eta(t,f(t)-\theta_1\varepsilon) \partial_t f(t) \, dt+ \int_a^b \partial_t \eta(t,f(t)+\theta_2\varepsilon) \partial_t f(t) \, dt \\
    &&=  \int_a^b \int_{f(t)-\theta_1\varepsilon}^{f(t)+\theta_2\varepsilon} \partial_{tx} \eta \partial_t f\, dx \, dt \\
    &&\leq C\sqrt{\varepsilon} || \partial_{tx}\eta ||_{L^2((0,T)\times(0,l))} ||\partial_t f||_{L^2(t_0-r,t_0+r)} \leq C\sqrt{\varepsilon}, 
\end{eqnarray*}
by the mean-value Theorem, where $\theta_1,\theta_2 \in [0,1]$. Finally 
\begin{eqnarray*}
    &&\int_{f(b)-\varepsilon}^{f(b)+\varepsilon} R(x) \varphi_\varepsilon(f(b),x)\, dx - \int_{f(a)-\varepsilon}^{f(a)+\varepsilon} L(x) \varphi_\varepsilon(f(a),x)\, dx\\
    &&\leq ||\partial_t \eta||_{L^\infty(0,T; L^2(0,l))}\left(\left(\int_{f(a)-\varepsilon}^{f(a)+\varepsilon} |\varphi_\varepsilon((a),x)\, dx|^2   \right)^{\frac12}+\left(\int_{f(b)-\varepsilon}^{f(b)+\varepsilon} |\varphi_\varepsilon(f(b),x)\, dx|^2   \right)^{\frac12}\right) \\
    &&\leq C\sqrt{\varepsilon}.
\end{eqnarray*}
Passing to the limit $\varepsilon\to 0$ in $\eqref{F:est}$ gives us $\eqref{iden:force}$. \\

Next, assume that $\{(t,f(t)): t\in [a,b]\}\subset \partial\{\eta=0\}\cap \text{int}\{\partial_t \eta\geq 0\}$. The sketch of the proof is as follows. First, we will show that $f$ has to be monotone on $[a,b]$, which will ensure that $\partial_t \eta$ vanishes at $\{(t,f(t)): t\in [a,b]\}$ by Theorem $\ref{main2}$, so $\partial_t\eta$ and trivially $\eta$ both reach its minimum and therefore satisfy $\partial_{xx}\eta,\partial_{txx} \eta\geq 0$ on $\{(t,f(t)): t\in [a,b]\}$. This means that the jump of $-\partial_{x}\eta-\partial_{tx}\eta$ in $x$-direction has to be non-positive, while on the other hand $\eqref{iden:force}$ implies that this jump is equal to $F_{con}\geq 0$, so we conclude $F_{con}=0$. \\

Let us therefore first show that $f$ is locally monotonous. Fix $t_0\in(a,b)$. Since $\{(t,f(t)): a\leq t \leq b\} \subset  \text{int}\{\partial_t \eta \geq 0\}$, there exists a rectangle
\begin{eqnarray*}
    Q:=[t_0-\theta,t_0+\theta]\times \left[\min\limits_{t\in [t_0-\theta,t_0+\theta]}f(t),\max\limits_{t\in [t_0-\theta,t_0+\theta]}f(t)\right]
\end{eqnarray*}
for some $\theta>0$ such that $Q\subset \text{int}\{\partial_t \eta \geq 0\}$. Assume that there are two points $\theta_1,\theta_2 \in [t_0-\theta,t_0+\theta] $ such that $f(\theta_1)=f(\theta_2)$. We will show that this implies $f=\text{const}$ on $[\theta_1,\theta_2]$. Assume the opposite, then w.l.o.g. there exists $\theta'\in (\theta_1,\theta_2)$ such that the minimum of $f$ on $[\theta_1,\theta_2]$ is achieved in $f(\theta')$. First, since $Q\subset \text{int}\{\partial_t \eta \geq 0\}$, this means that
\begin{eqnarray*}
    (t,f(t))\in Q \implies \eta = 0 \text{ on }[t_0-\theta,t]\times \{f(t)\},
\end{eqnarray*}
so 
\begin{eqnarray*}
    \bigcup_{t\in [\theta',\theta_2]}[t_0-\theta,t]\times \{f(t)\} \subset \{\eta = 0\}.
\end{eqnarray*}
However, by continuity of $f$, there exist $\theta'',\theta'''$ such that
\begin{eqnarray*}
    \theta_1 < \theta''< \theta' < \theta'''<\theta_2, \qquad f(\theta_1)>f(\theta'')>f(\theta')<f(\theta''')<f(\theta_2)=f(\theta_1),
\end{eqnarray*}
which implies that 
\begin{eqnarray*}
    (\theta'',f(\theta''))\in ( \theta_1, \theta')\times (f(\theta'),f(\theta_2))\subset\text{int}\left(\bigcup_{t\in [\theta',\theta_2]}[t_0-\theta,t]\times \{f(t)\} \right) \subset \text{int}\{\eta = 0\},
\end{eqnarray*}
so $(\theta'',f(\theta''))$ cannot belong to $\partial\{\eta = 0\}$, and that is a contradiction. Thus, this means that for every point $t_0 \in (a,b)$, there is a neighborhood $[t_0-\theta,t_0+\theta]$ such that $f$ is monotone on $[t_0-\theta,t_0+\theta]$, so $f$ is locally monotone and therefore monotone on $(a,b)$. Therefore, by Theorem $\ref{main2}$, $\partial_t \eta$ vanishes on the graph of $f$ in the sense of $\eqref{par:int:1}$ and $\eqref{par:int:2}$. Taking into consideration that $\partial_t \eta (t,f(t))\sqrt{1+f'(t)^2} \in L^1(a,b)$ since $f\in H^1(a,b)$ by assumption, this implies that $\partial_t \eta(t,f(t))\sqrt{1+f'(t)^2}=0$ a.e. on $(a,b)$ so
\begin{eqnarray*}
   &&\int_a^b\int_{f(t)-\varepsilon}^{f(t)+\varepsilon}\partial_{tx} \eta \partial_x\varphi_\varepsilon\, dx \, dt = \frac1\varepsilon\int_a^b \int_{f(t)-\varepsilon}^{f(t)} \partial_{tx} \eta \, dx \, dt- \frac1\varepsilon\int_a^b \int_{f(t)}^{f(t)+\varepsilon} \partial_{tx} \eta\, dx \, dt \\
    && =-\underbrace{\frac1\varepsilon\int_{a}^b\partial_t\eta(t,f(t)-\varepsilon)\sqrt{1+f'(t)^2}\, dt}_{\geq 0}+\underbrace{\frac1\varepsilon\int_{a}^b\partial_t\eta(t,f(t)) \sqrt{1+f'(t)^2}\, dt}_{=0}\\
    &&\quad +\underbrace{\frac1\varepsilon\int_{a}^b\partial_t\eta(t,f(t))\sqrt{1+f'(t)^2}\, dt}_{=0}-\underbrace{\frac1\varepsilon\int_{a}^b\partial_t\eta(t,f(t)+\varepsilon)\sqrt{1+f'(t)^2}\, dt}_{\geq 0} \leq 0.
\end{eqnarray*}
Here, $\varepsilon$ is chosen to satisfy $0<\varepsilon<\varepsilon_0$, where $\varepsilon_0>0$ is small enough so that the graphs $\{(t,f(t)+\varepsilon): a\leq t \leq b\} \subset \text{int}\{\partial_t \eta \geq 0\}$ and $\{(t,f(t)-\varepsilon): a\leq t \leq b\} \subset \text{int}\{\partial_t \eta \geq 0\}$, for any $0<\varepsilon<\varepsilon_0$. Note that such $\varepsilon_0$ exists since the set $\{(t,f(t)+\varepsilon): a\leq t \leq b\}$ is closed and thus has a positive distance from the boundary of an open set $\partial \{\partial_t \eta \geq 0\}$.
Now, since 
\begin{eqnarray*}
      &&\int_a^b\int_{f(t)-\varepsilon}^{f(t)+\varepsilon}\partial_x \eta \partial_x\varphi_\varepsilon \, dx \, dt = \frac1\varepsilon\int_a^b \int_{f(t)-\varepsilon}^{f(t)} \partial_x \eta \, dx \, dt- \frac1\varepsilon\int_a^b \int_{f(t)}^{f(t)+\varepsilon} \partial_x \eta\, dx \, dt \\
    && =-\underbrace{\frac1\varepsilon\int_{a}^b\eta(t,f(t)-\varepsilon)\sqrt{1+f'(t)^2}\, dt}_{\geq 0}+\underbrace{\frac1\varepsilon\int_{a}^b\eta(t,f(t))\sqrt{1+f'(t)^2}\, dt}_{= 0}\\
    &&\quad +\underbrace{\frac1\varepsilon\int_{a}^b\eta(t,f(t))\sqrt{1+f'(t)^2}\, dt}_{= 0}-\underbrace{\frac1\varepsilon\int_{a}^b\eta(t,f(t)+\varepsilon)\sqrt{1+f'(t)^2}\, dt}_{\geq 0} \leq 0,
\end{eqnarray*}
by $\eqref{iden:force}$ and previous two inequalities we conclude that 
\begin{eqnarray*}
   0\leq F_{con}\Big(\{(t,g(t)): a\leq t \leq b\}\cap \text{int}\{\partial_t \eta \geq 0\}\Big) \leq 0,
\end{eqnarray*}
so $\eqref{iden:forcea3}$ follows.

\bigskip

In order to prove $\eqref{iden:force2}$, assume that $\{(t,f(t)): a\leq t \leq b\} \subset  \text{int}\{\partial_t \eta \geq 0\}$. We choose $\varphi(t,x) = \psi_\delta(t)\phi(x)$ in $\eqref{momeqweak}$, where $\phi\in C_c^\infty(x_0,x_1)$, and
\begin{eqnarray*}
      \psi_\delta(t)=\begin{cases}
        0,& t\leq t_1-\delta, \\
        -\frac1\delta(t-(t_1-\delta)),& t_1-\delta\leq t \leq t_1, \\
        \frac1\delta(t-(t_1+\delta)), & t_1\leq t \leq t_1+\delta,\\
         0,& t> t_1+\delta.
    \end{cases}
\end{eqnarray*}
We then pass to the limit $\delta \to 0$, and by a density argument, extend the test functions to $L^2(x_0,x_1)$, ensuring validity of $\ref{iden:force2}$ almost everywhere on $(x_0,x_1)$. \\

Now, if $\{t_1\}\times (x_0,x_1)\subseteq \text{int}\{\partial_t \eta \geq 0\}$, there is a $t_0<t_1$ such that $(t_0,t_1)\times (x_0,x_1)\subseteq \text{int}\{\partial_t \eta \geq 0\}$ which implies that $\eta=0$ on $(t_0,t_1)\times (x_0,x_1)$, so by using $\eqref{vanish:vel}$ we conclude $\eqref{iden:force2a3}$. This concludes the proof.

\section{Numerical examples}\label{numerics}
In the final section, we present two numerical examples to illustrate the theoretical results. The equations are discretized as follows:
\begin{eqnarray*}
    \begin{cases}
    &\frac{\eta_{j}^{i+1}-2\eta_j^{i}+\eta_j^{i-1}}{(\Delta t)^2}  - \frac\alpha{\Delta t}\left(\frac{\eta_{j+1}^{i+1}-2\eta_{j}^{i+1}+\eta_{j-1}^{i+1}}{(\Delta x)^2} - \frac{\eta_{j+1}^{i}-2\eta_{j}^{i}+\eta_{j-1}^{i}}{(\Delta x)^2}\right) - \frac{\eta_{j+1}^{i+1}-2\eta_{j}^{i+1}+\eta_{j-1}^{i+1}}{(\Delta x)^2} = \frac1\varepsilon \chi_{\{\eta_{j}^i<0\}} \left(\frac{\eta_{j}^{i}-\eta_j^{i-1}}{\Delta t}\right)^-,\\ \\
    &\eta^{i}_0 =  \eta^{i}_{N} = 0, \quad i \geq 1, 
    \end{cases}
\end{eqnarray*}
where $0\leq i \leq \frac{l}{\Delta x}=: N \in \mathbb{N}$, $1\leq j \leq \frac{T}{\Delta t}=: M \in \mathbb{N}$ and $\eta^i = [ \eta_0^i~,~ \eta_1^i~,~ \dots ~, ~\eta_N^i]^T$, with $\alpha>0$ being the viscoelasticity coefficient. Note that the penalization force is taken explicitly from the previous time step.

All numerical computations were performed using MATLAB. We tested the numerical convergence with respect to both the time and space discretization parameters, as well as the penalization parameter $\varepsilon$.

\subsection{Example 1}
For this example, we fix $l=1,~ T=0.3,~ \Delta t = \Delta x = 1/5000,~ \alpha=0.01,~ \varepsilon=0.0005$, while the initial data is
\begin{eqnarray*}
    \eta^{0} = 1+ \frac12 \sin^2(10 \pi x), \quad v^0 = -50.
\end{eqnarray*}

\begin{figure}[H]
     \centering
     \begin{subfigure}[b]{0.49\textwidth}
         \centering
         \includegraphics[width=\textwidth]{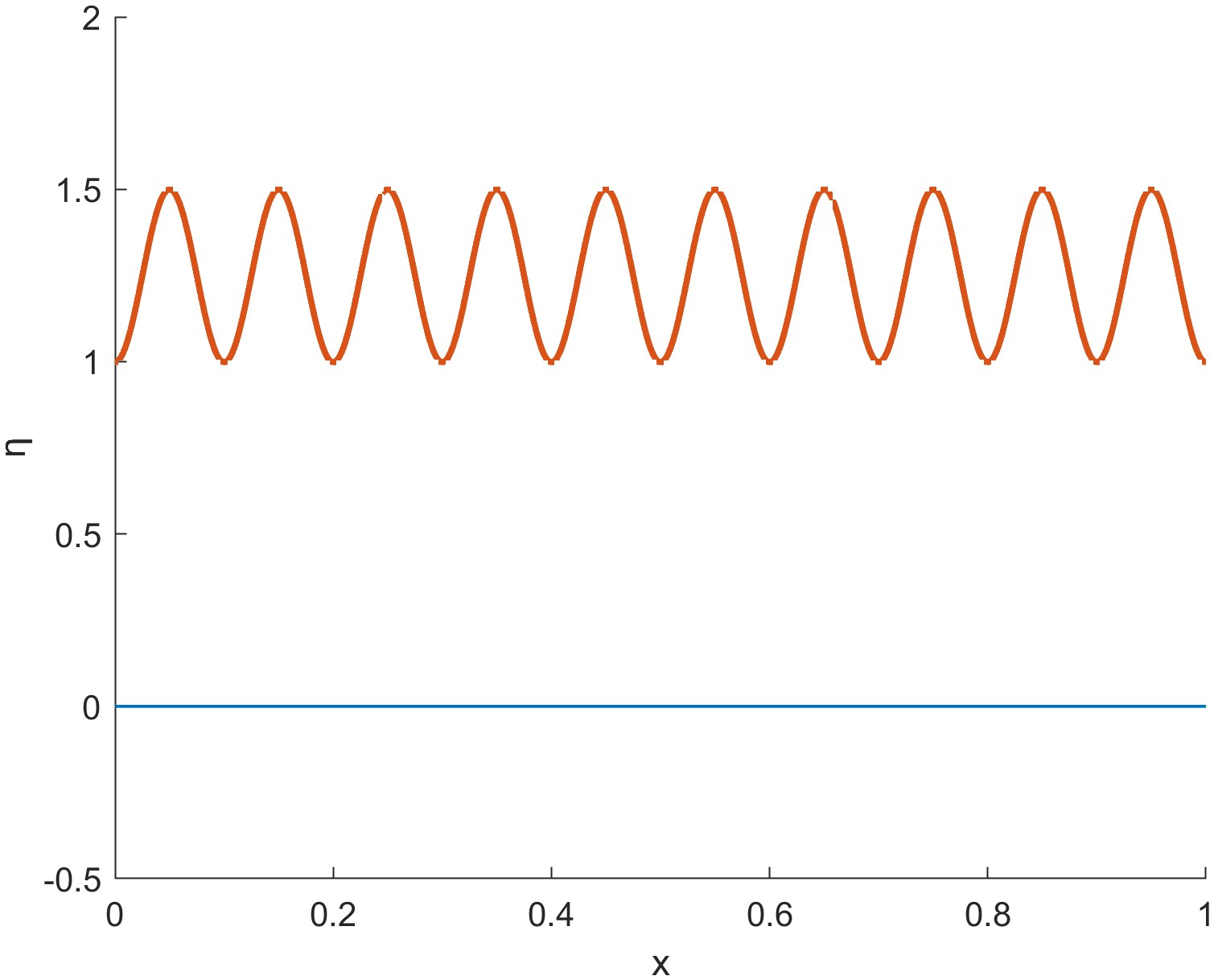}
         \caption{$t=0$}
         \label{fig1:a}
     \end{subfigure}
     \hfill
     \begin{subfigure}[b]{0.49\textwidth}
         \centering
         \includegraphics[width=\textwidth]{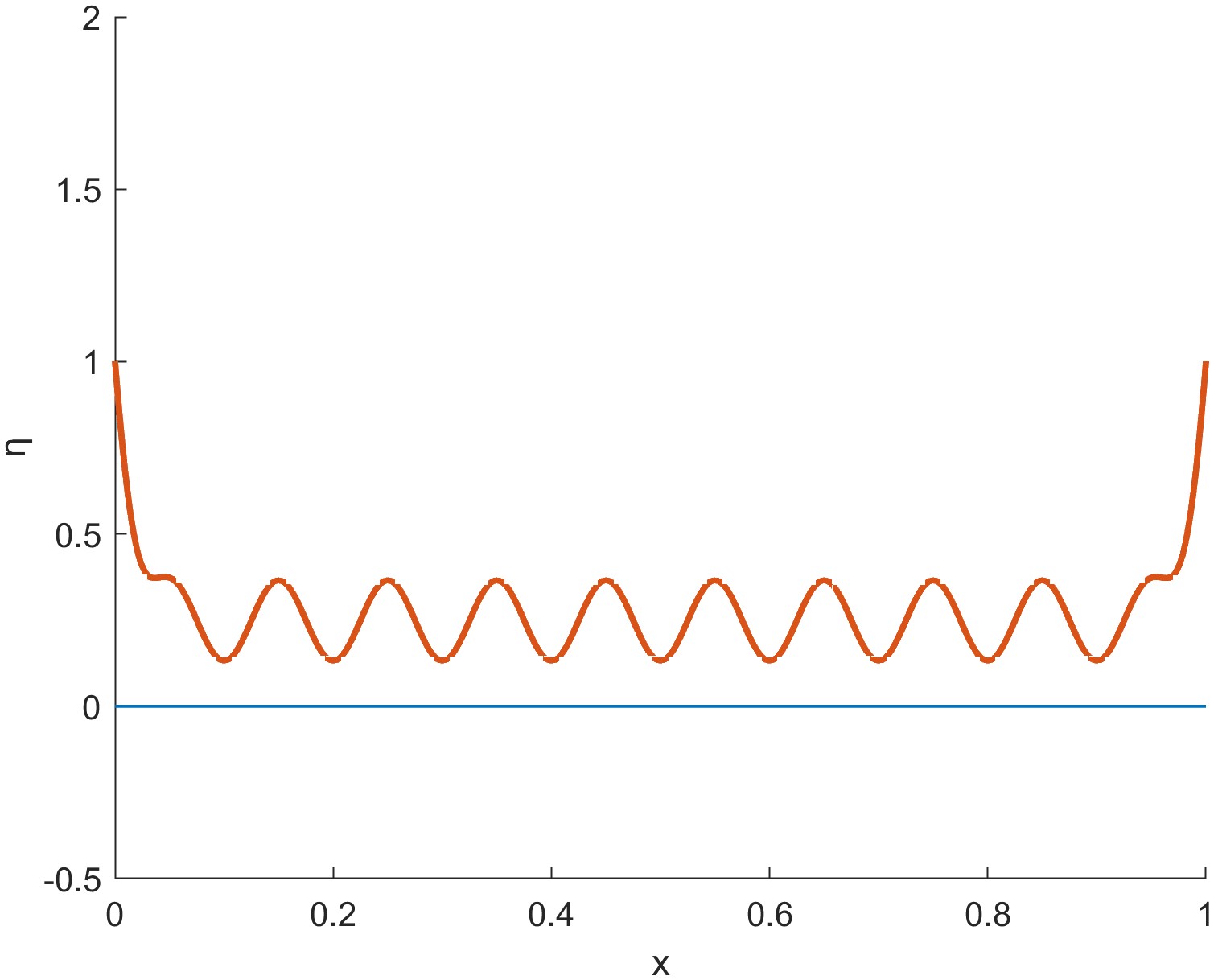}
         \caption{$t=0.02$}
         \label{12}
     \end{subfigure} \\
          \centering
     \begin{subfigure}[b]{0.49\textwidth}
         \centering
         \includegraphics[width=\textwidth]{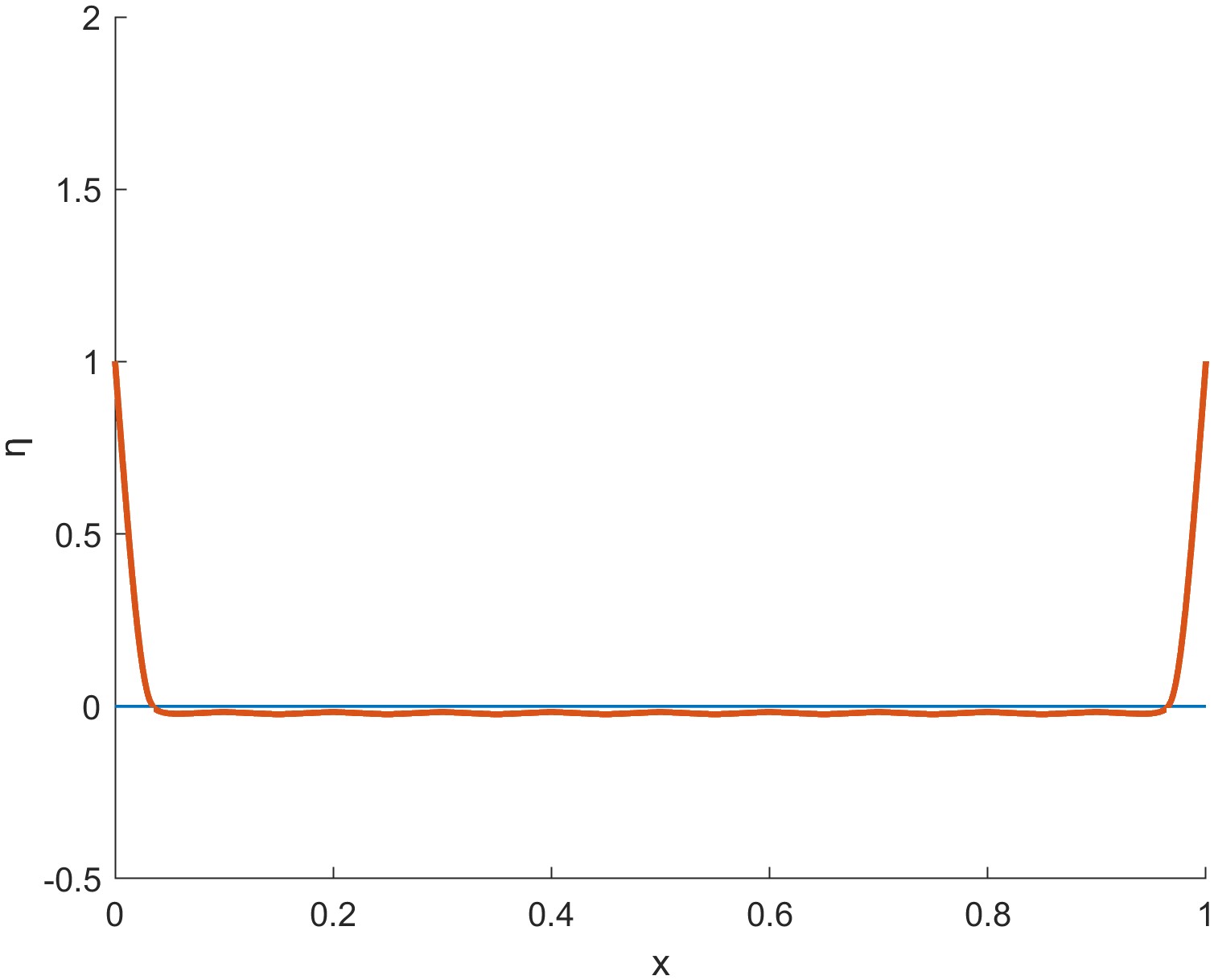}
         \caption{$t=0.04$}
         \label{13}
     \end{subfigure}
     \hfill
     \begin{subfigure}[b]{0.49\textwidth}
         \centering
         \includegraphics[width=\textwidth]{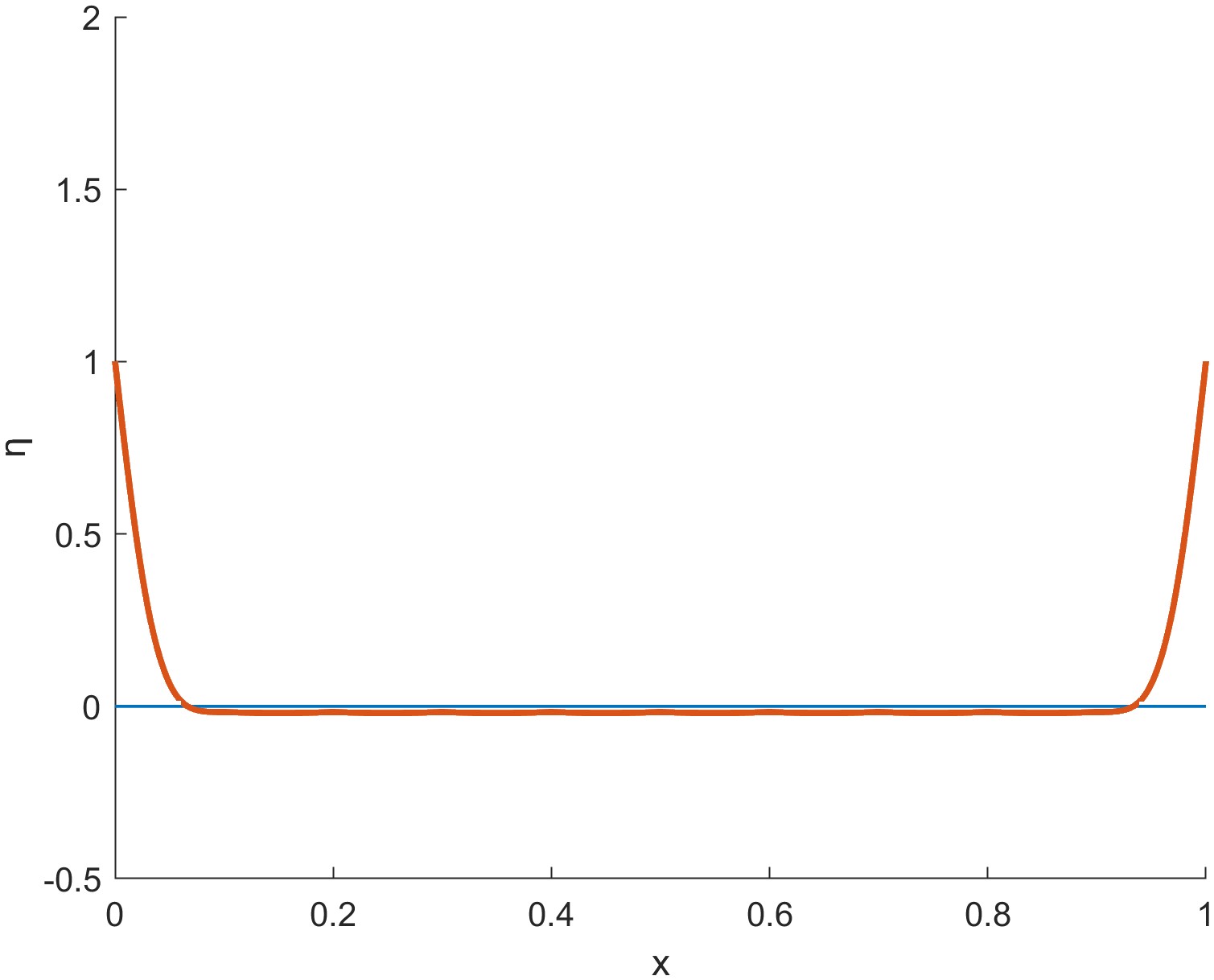}
         \caption{$t=0.06$}
         \label{14}
     \end{subfigure}
   \begin{subfigure}[b]{0.49\textwidth}
         \centering
\includegraphics[width=\textwidth]{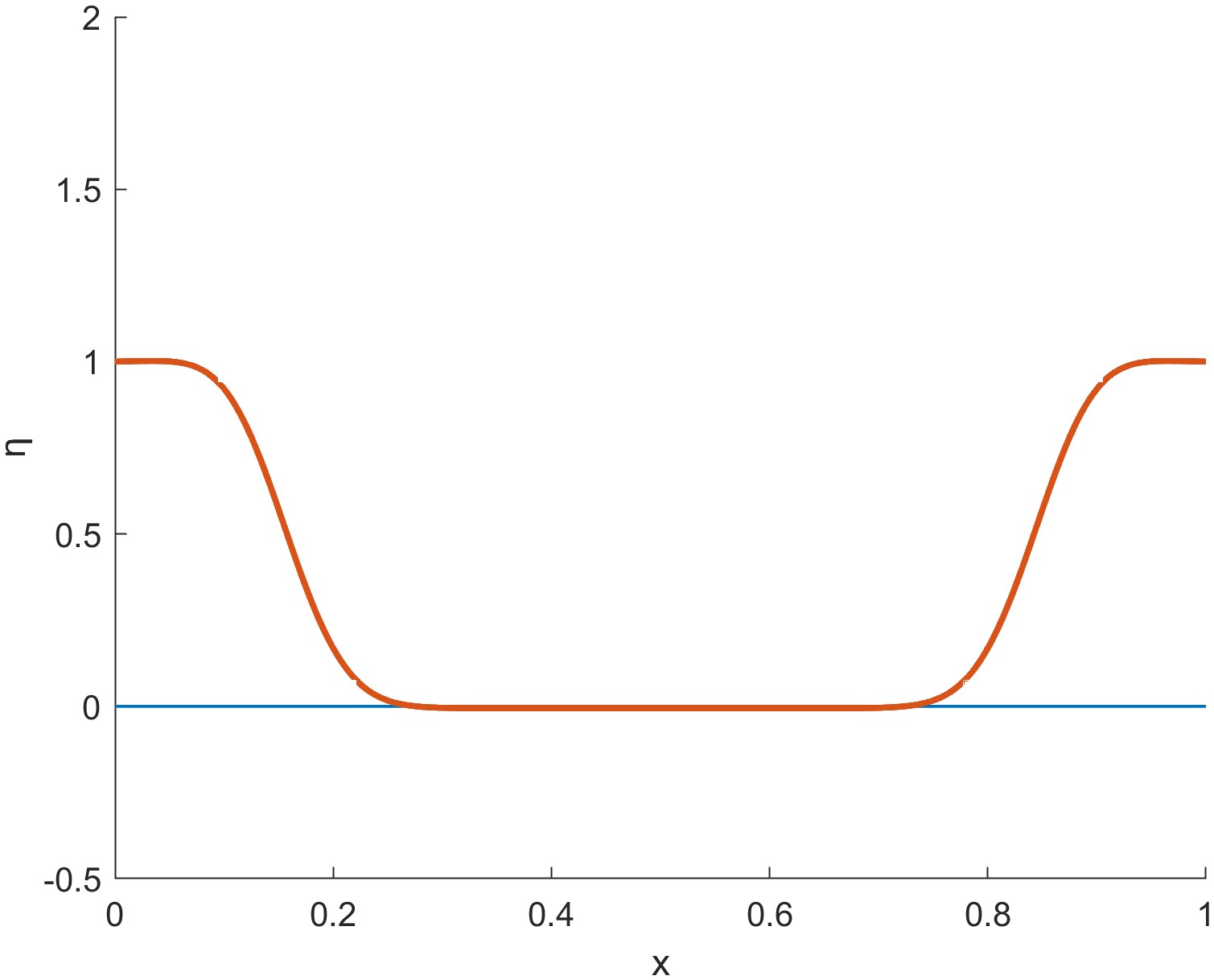}
         \caption{$t=0.2$}
         \label{15}
     \end{subfigure}
     \hfill
     \begin{subfigure}[b]{0.49\textwidth}
         \centering
         \includegraphics[width=\textwidth]{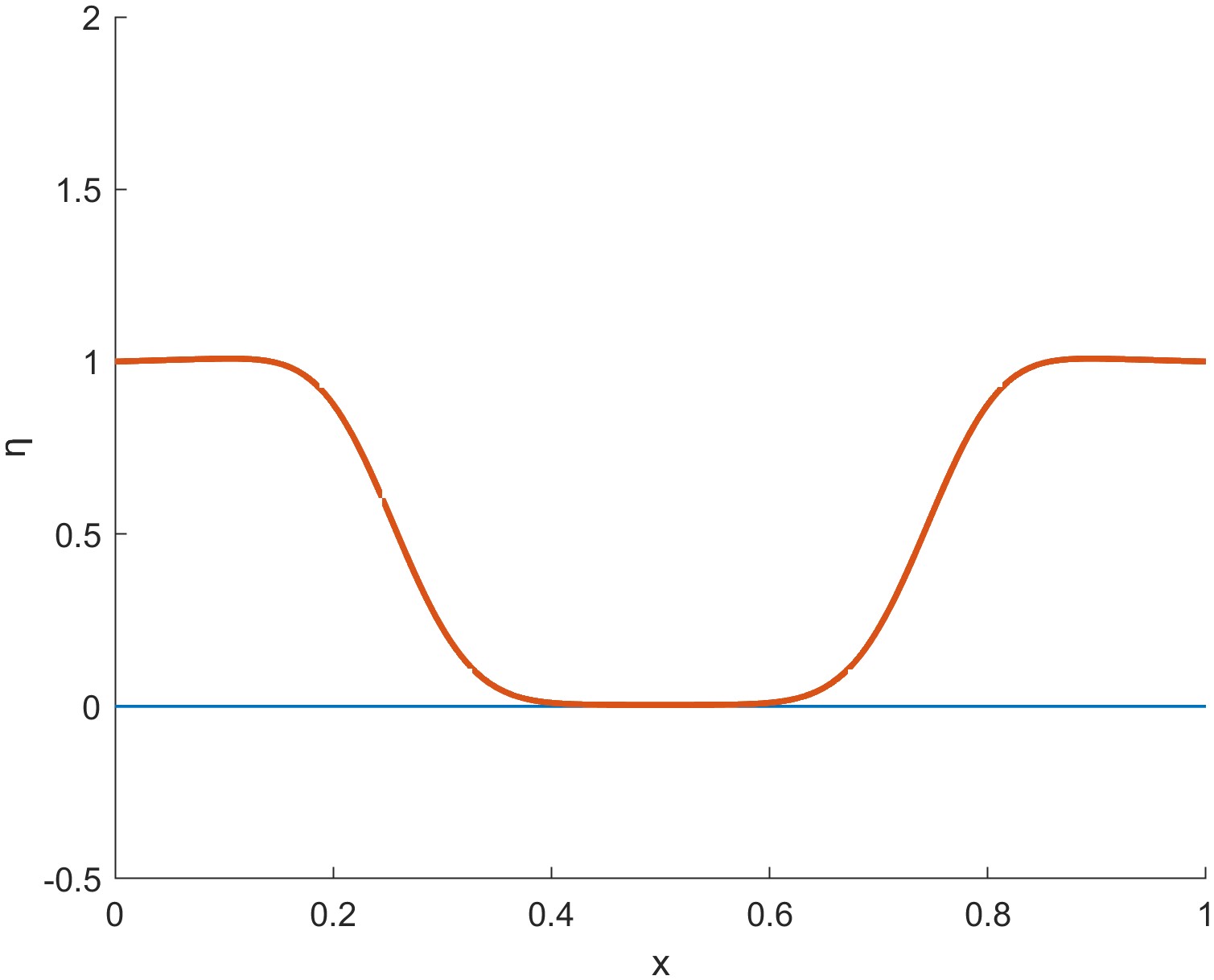}
         \caption{$t=0.3$}
         \label{16}
     \end{subfigure}
     
\caption{Solution of example 1 at different times.}
\label{momenti1}
     
\end{figure}
First, in Figures~\ref{momenti1}(\subref*{fig1:a}) -~\ref{momenti1}(\subref*{16}), we represent the solution at different times, then in figure \ref{ex1} we also display details of the contact set and the velocity field. In this example, we illustrate how contact forms and evolves within our model, and how the formulation allows for detachment from the obstacle. The simulation also highlights how contact leads to energy dissipation: oscillations are visibly damped once contact occurs. We prescribe symmetric, oscillatory initial data for the displacement and impose an initial downward velocity. As a result, contact forms and quickly spreads across almost the entire length of the string. Within the contact region, oscillations are almost entirely suppressed due to localized damping. At later times, the boundary conditions begin to "pull back" the string, reducing the size of the contact set. Owing to the symmetry of the initial data, the contact set also remains symmetric throughout the evolution.
\begin{figure}[H]
    \begin{subfigure}[b]{0.49\textwidth}
    \centering
    \includegraphics[width=0.7\linewidth]{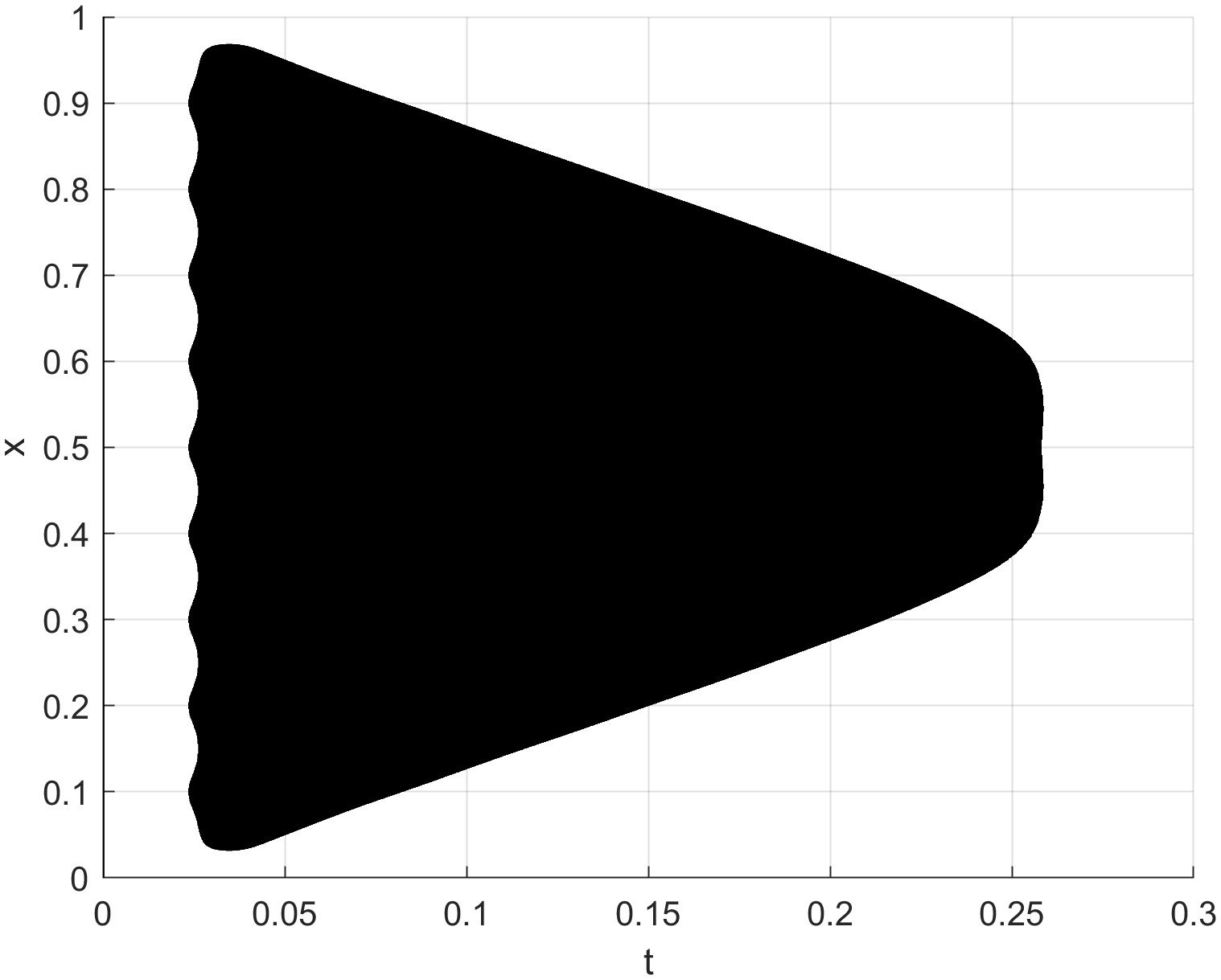}
\end{subfigure}
 \hfill
\begin{subfigure}[b]{0.49\textwidth}
\centering
    \includegraphics[width=0.7\linewidth]{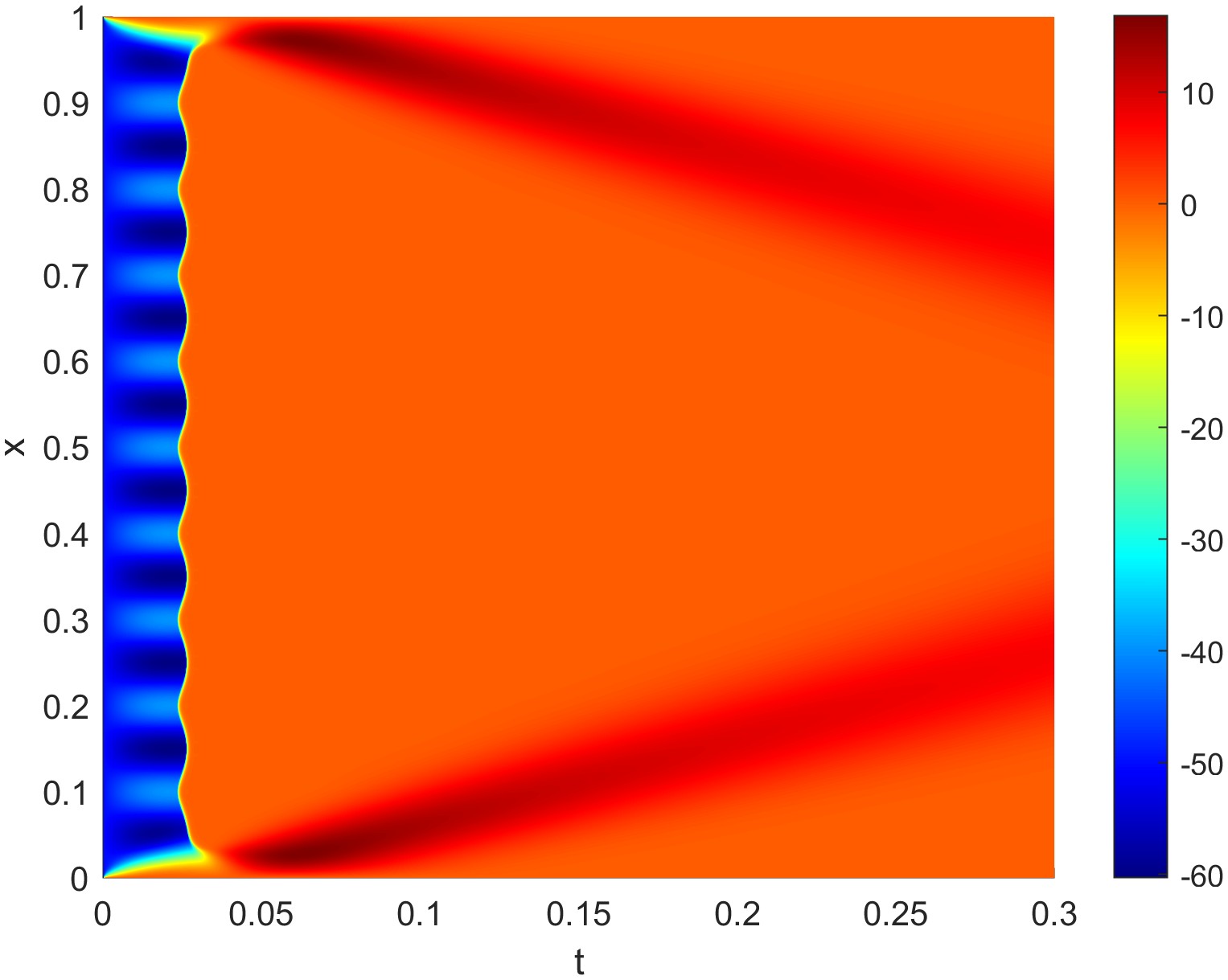}
    \end{subfigure}
  
    \caption{Contact set (left) and velocity field (right) for example 1.} 
      \label{ex1}
\end{figure}

\subsection{Example 2}
For this example, we fix $l=1,~ T=0.5,~ \Delta t = \Delta x = 1/5000,~ \alpha=0.01,~ \varepsilon=0.0005$, the initial data is
\begin{eqnarray*}
  \eta^0 = \begin{cases}
       x, \quad &0\leq x<0.2,\\
       \sin(\pi(x-0.2)/0.3 ), \quad  &0.2 \leq x < 0.8, \\
       2-x, \quad &0.8\leq x<1, 
   \end{cases}  \qquad v^0 = \begin{cases}
       -50, \quad &0\leq x<0.6,\\
       -0.5, \quad  &0.6 \leq x \leq 1.
   \end{cases}
\end{eqnarray*}
\begin{figure}[H]
     \centering
     \begin{subfigure}[b]{0.49\textwidth}
         \centering
         \includegraphics[width=\textwidth]{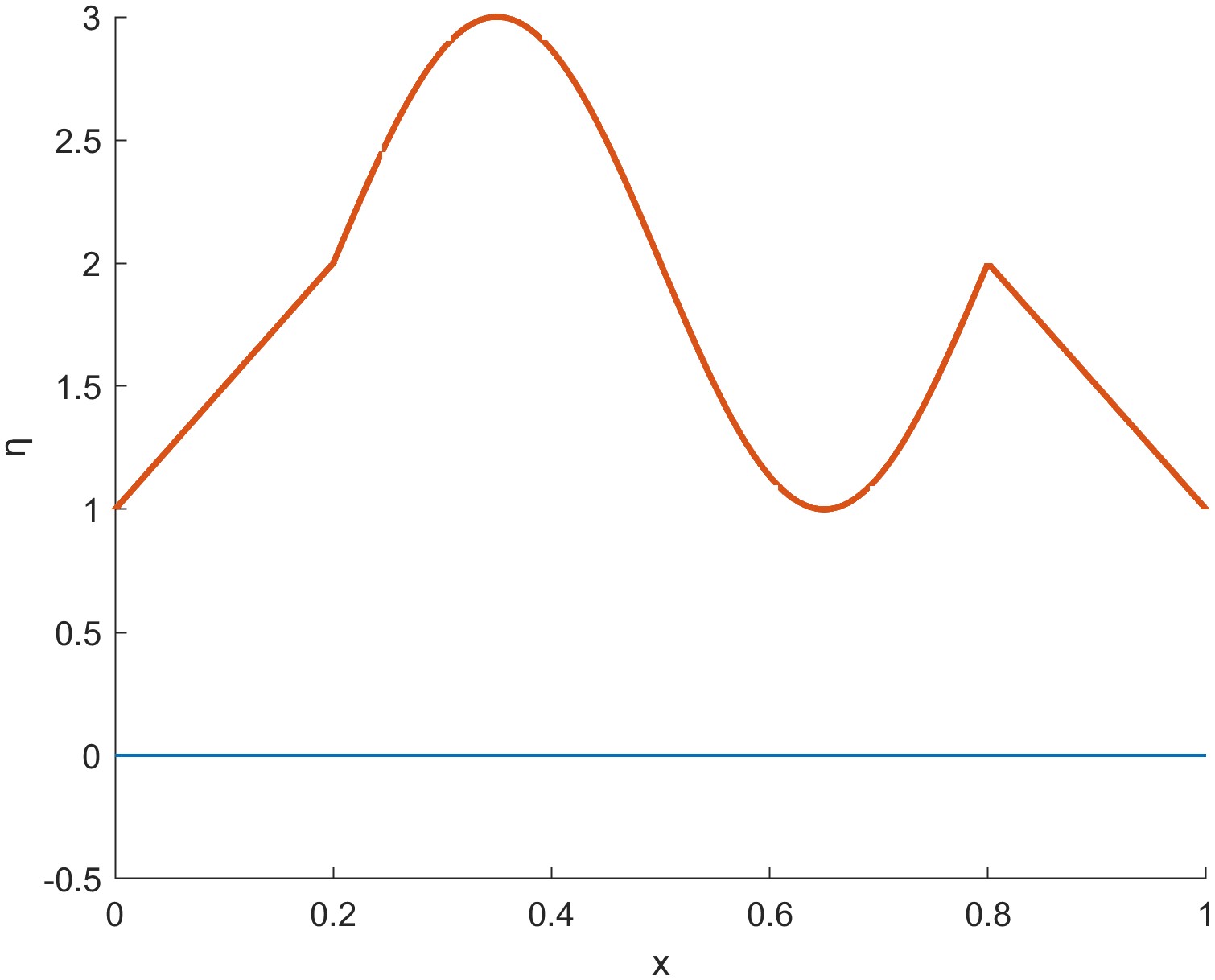}
         \caption{$t=0$}
         \label{21}
     \end{subfigure}
     \hfill
     \begin{subfigure}[b]{0.49\textwidth}
         \centering
         \includegraphics[width=\textwidth]{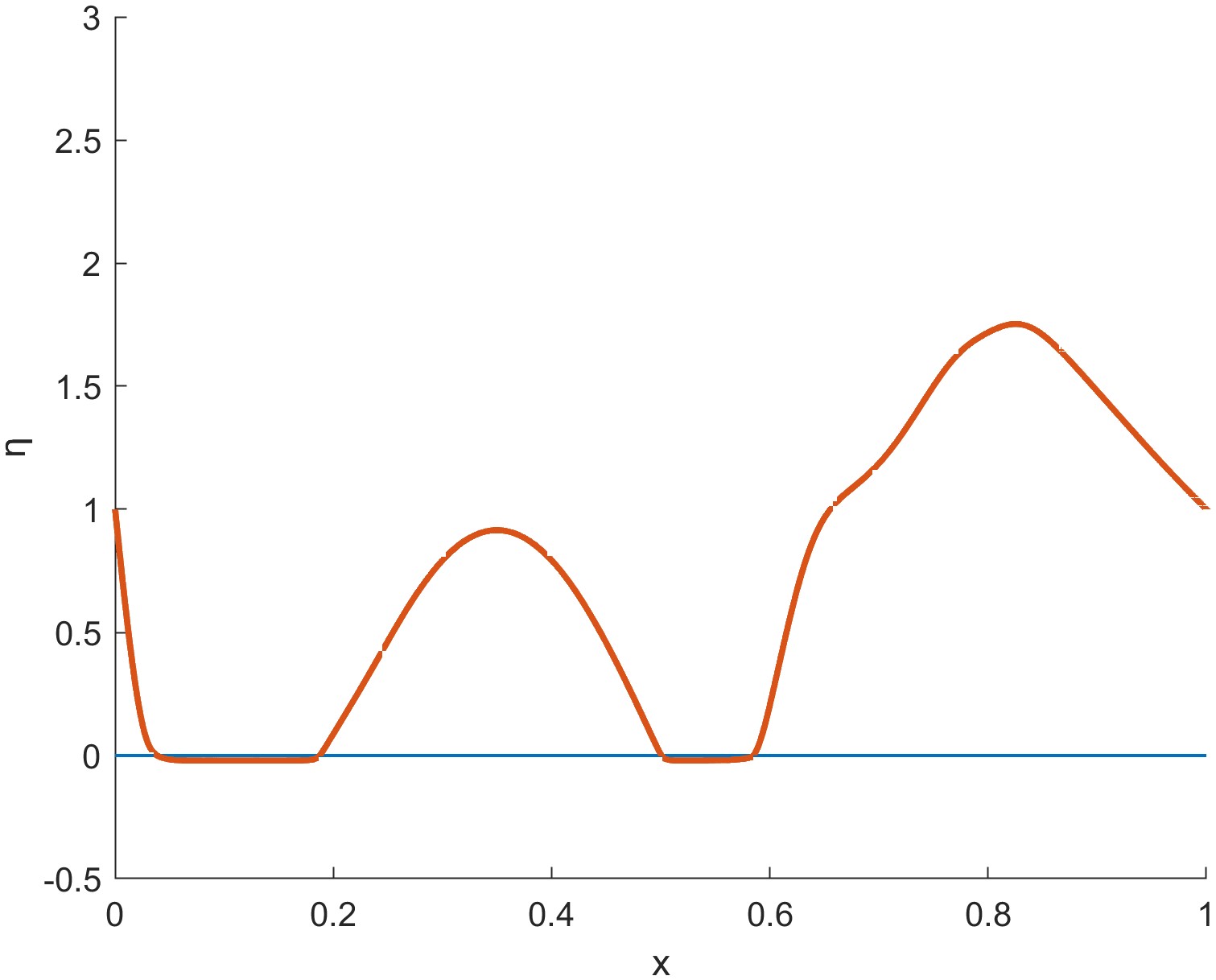}
         \caption{$t=0.04$}
         \label{22}
     \end{subfigure} \\
          \centering
     \begin{subfigure}[b]{0.49\textwidth}
         \centering
         \includegraphics[width=\textwidth]{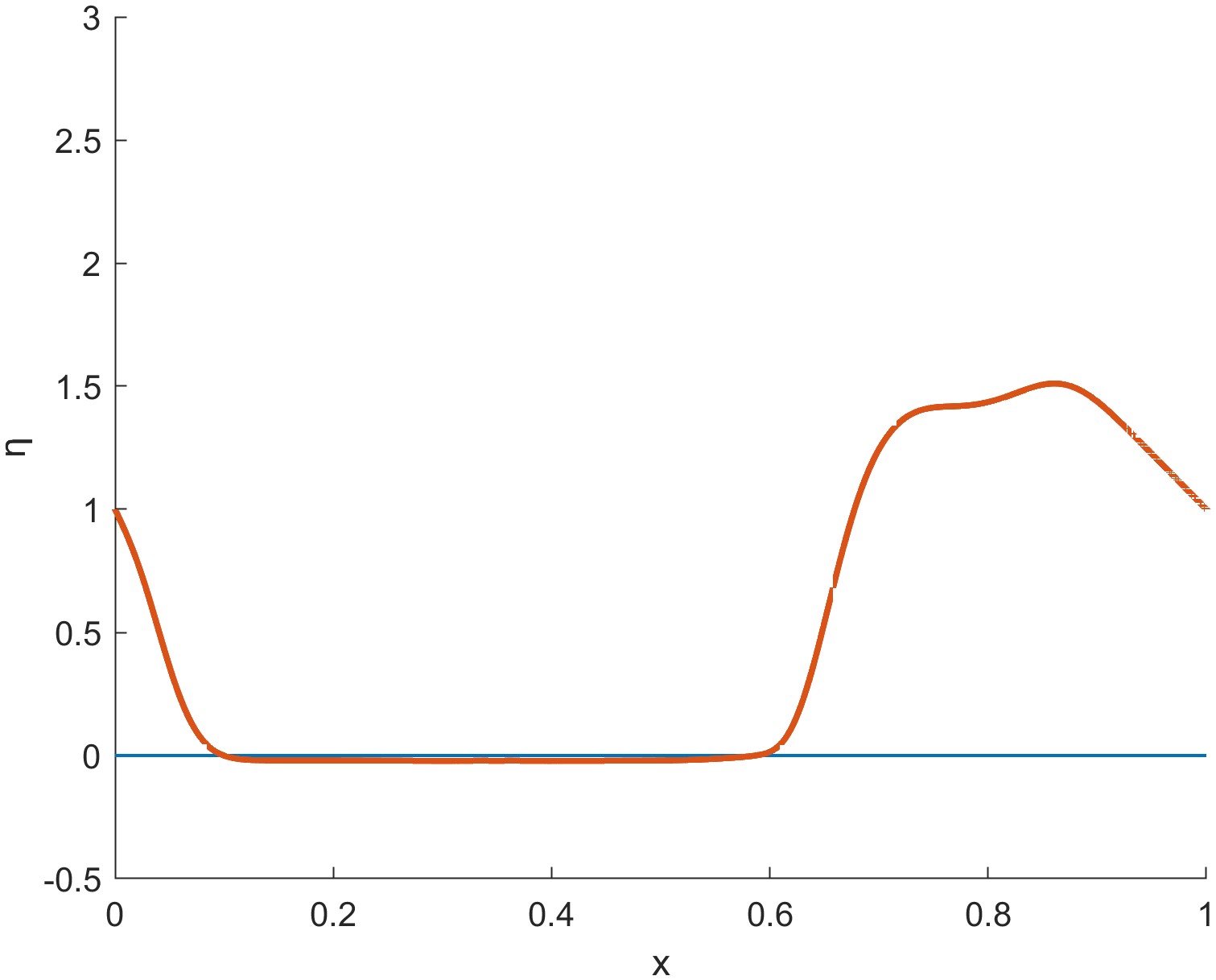}
         \caption{$t=0.08$}
         \label{23}
     \end{subfigure}
     \hfill
     \begin{subfigure}[b]{0.49\textwidth}
         \centering
         \includegraphics[width=\textwidth]{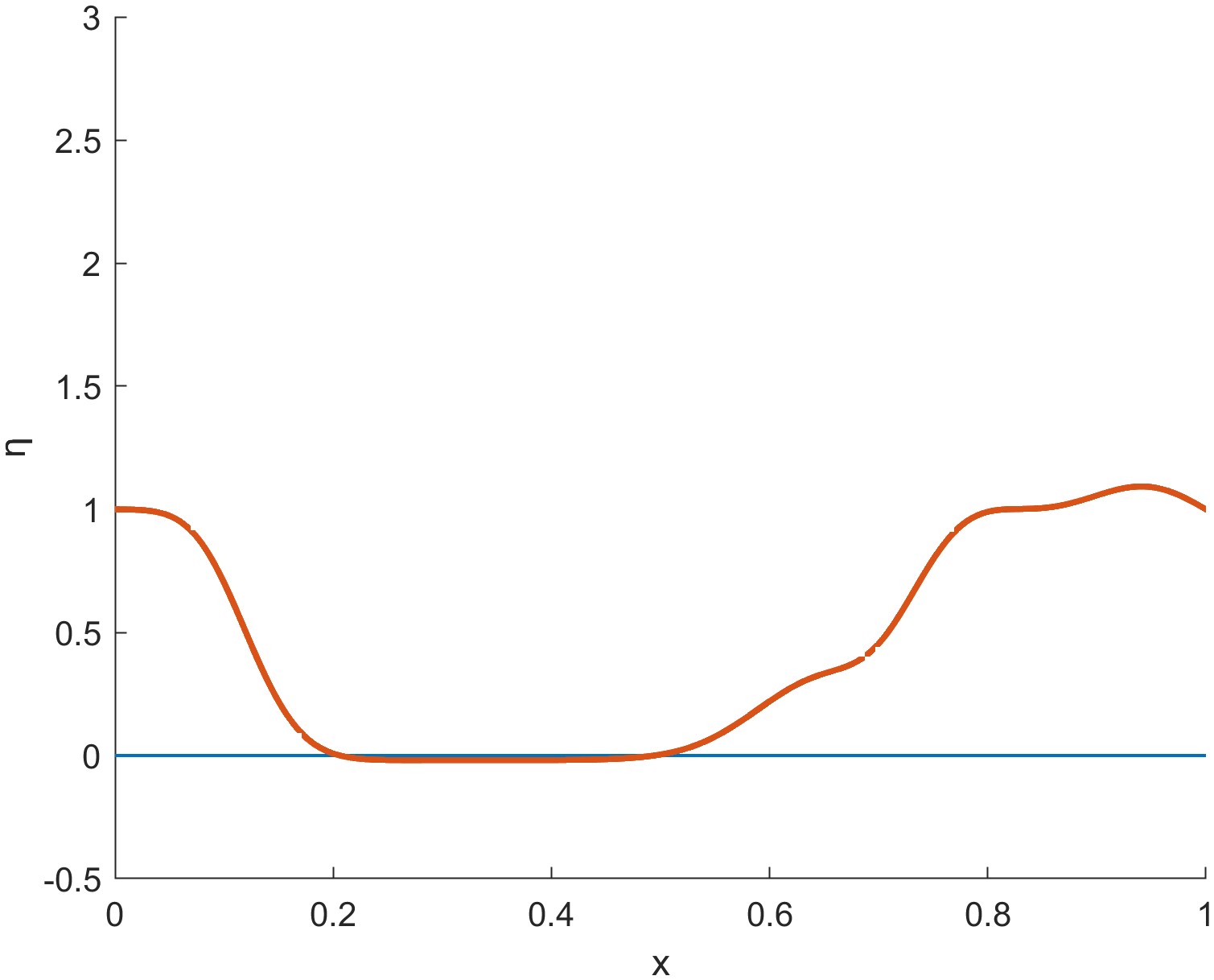}
         \caption{$t=0.16$}
         \label{24}
     \end{subfigure}
   \begin{subfigure}[b]{0.49\textwidth}
         \centering
\includegraphics[width=\textwidth]{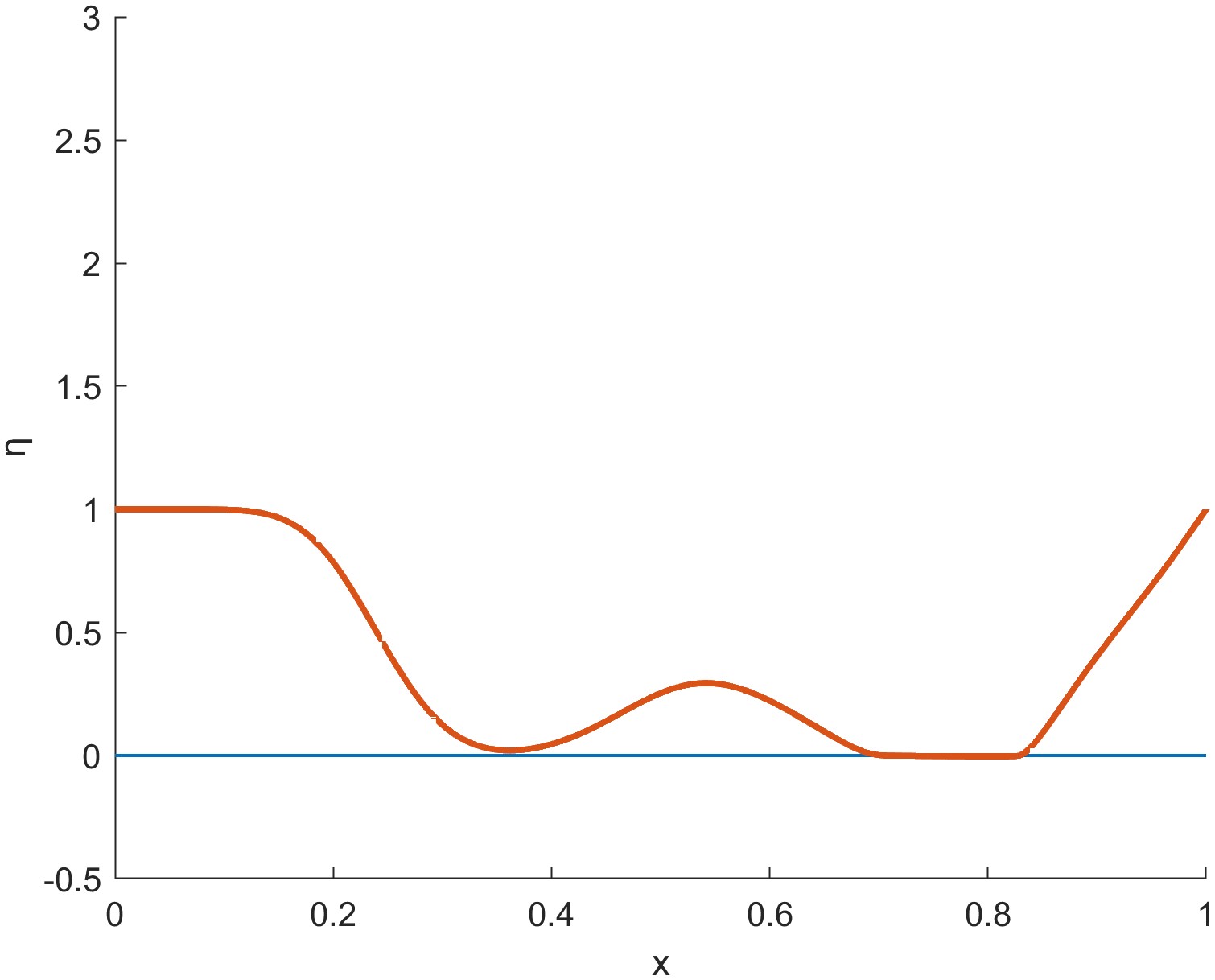}
         \caption{$t=0.28$}
         \label{25}
     \end{subfigure}
     \hfill
     \begin{subfigure}[b]{0.49\textwidth}
         \centering
         \includegraphics[width=\textwidth]{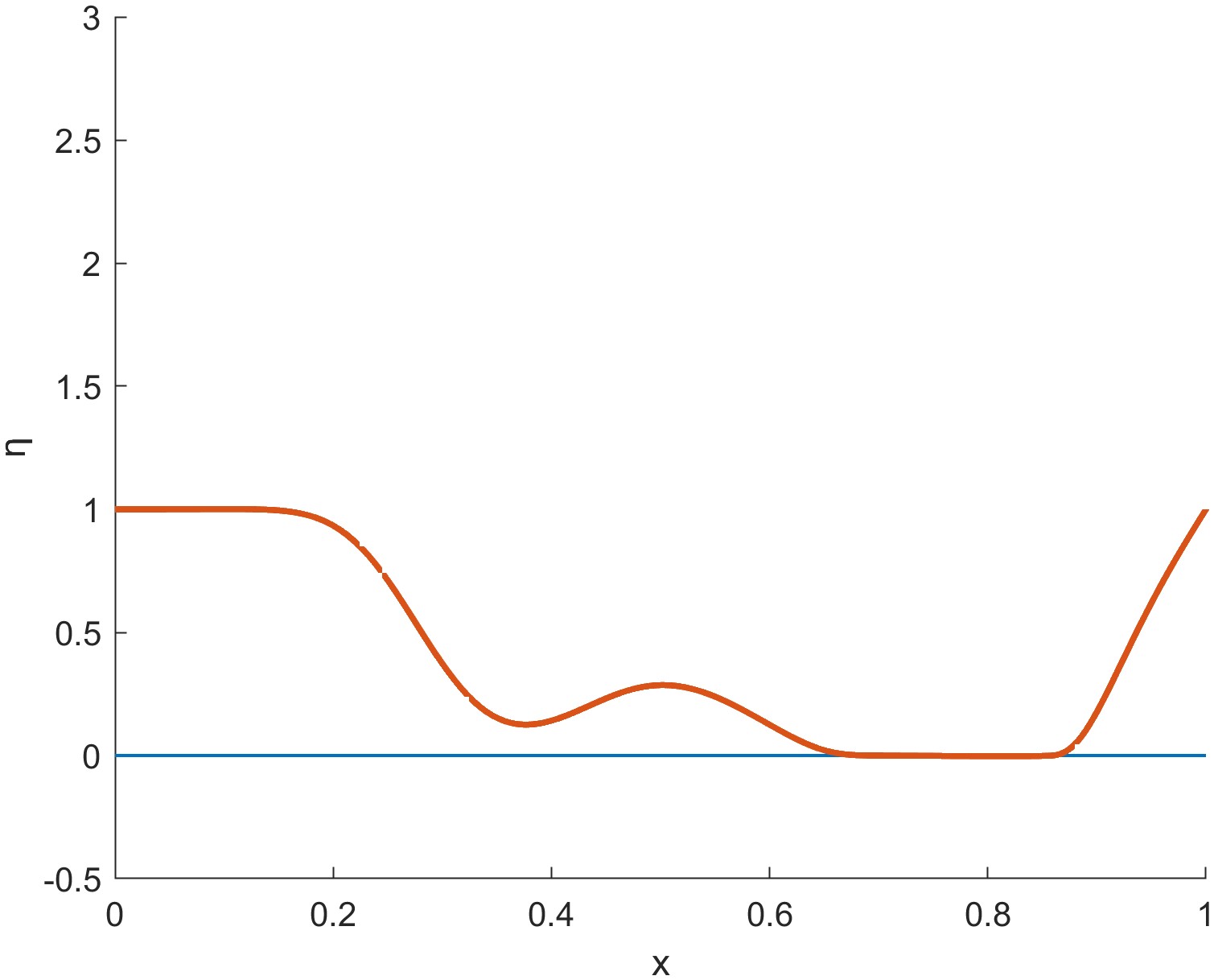}
         \caption{$t=0.32$}
         \label{26}
     \end{subfigure}
     
\caption{Solution of example 1 at different times.}
\label{momenti2}
     
\end{figure}
In Figures~\ref{momenti2}(\subref*{21}) -~\ref{momenti2}(\subref*{26}), we represent the solution at different times, then in Figure \ref{ex2} we also display details of the contact set and the velocity field. The setting of this example is similar to the first, but the initial data are non-oscillatory. This setup demonstrates that the contact set can consist of multiple disconnected components. Due to the shape of the initial displacement and the imposed downward velocity, contact forms in separate regions along the string rather than as a single connected segment. This highlights that, even in the absence of oscillations, the model naturally accommodates complex contact geometries arising from localized interactions with the obstacle.
\begin{figure}[H]
\begin{subfigure}[b]{0.49\textwidth}
        \centering
    \includegraphics[width=0.7\linewidth]{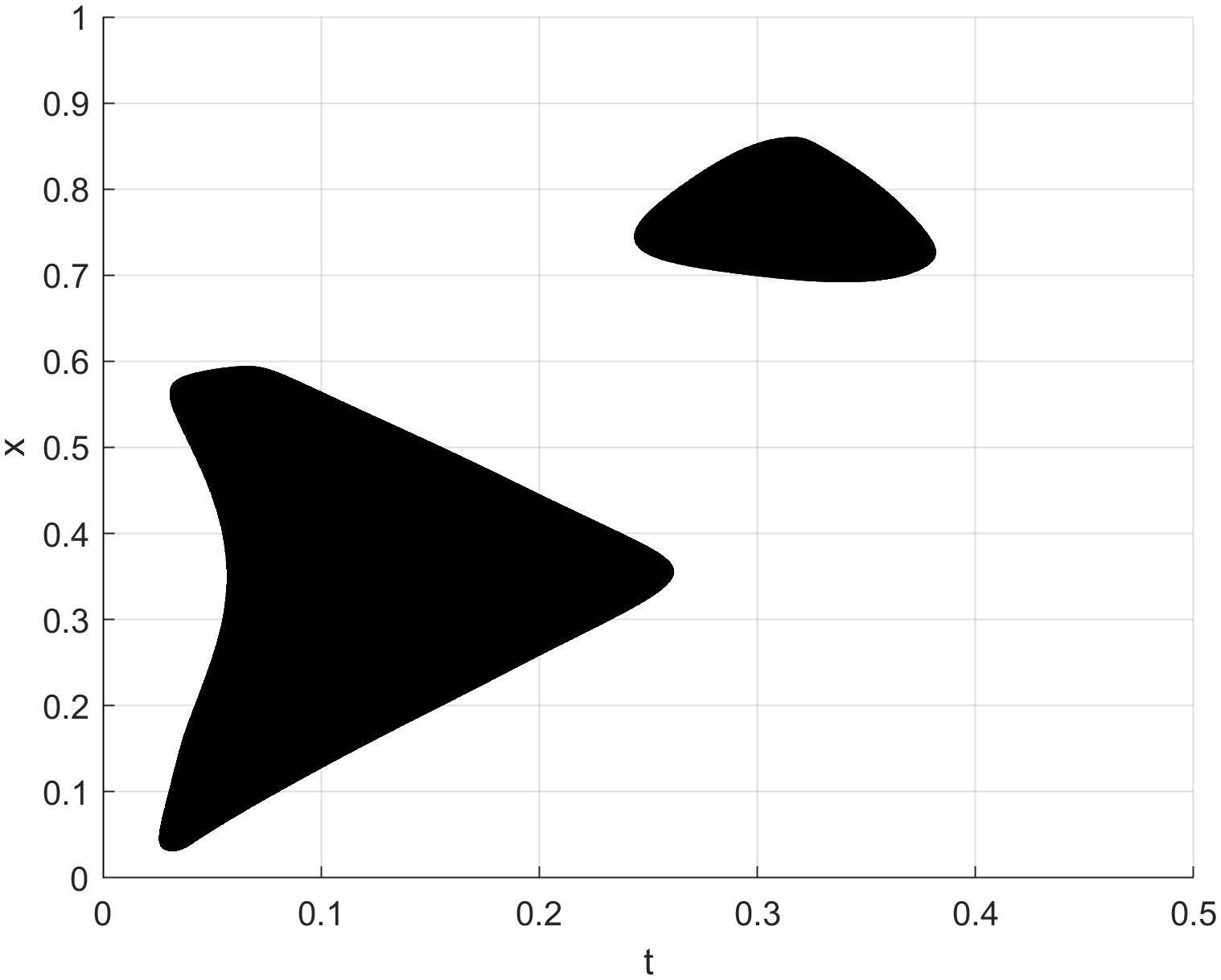}
\end{subfigure}
\hfill
\begin{subfigure}[b]{0.49\textwidth}
    \centering
    \includegraphics[width=0.7\linewidth]{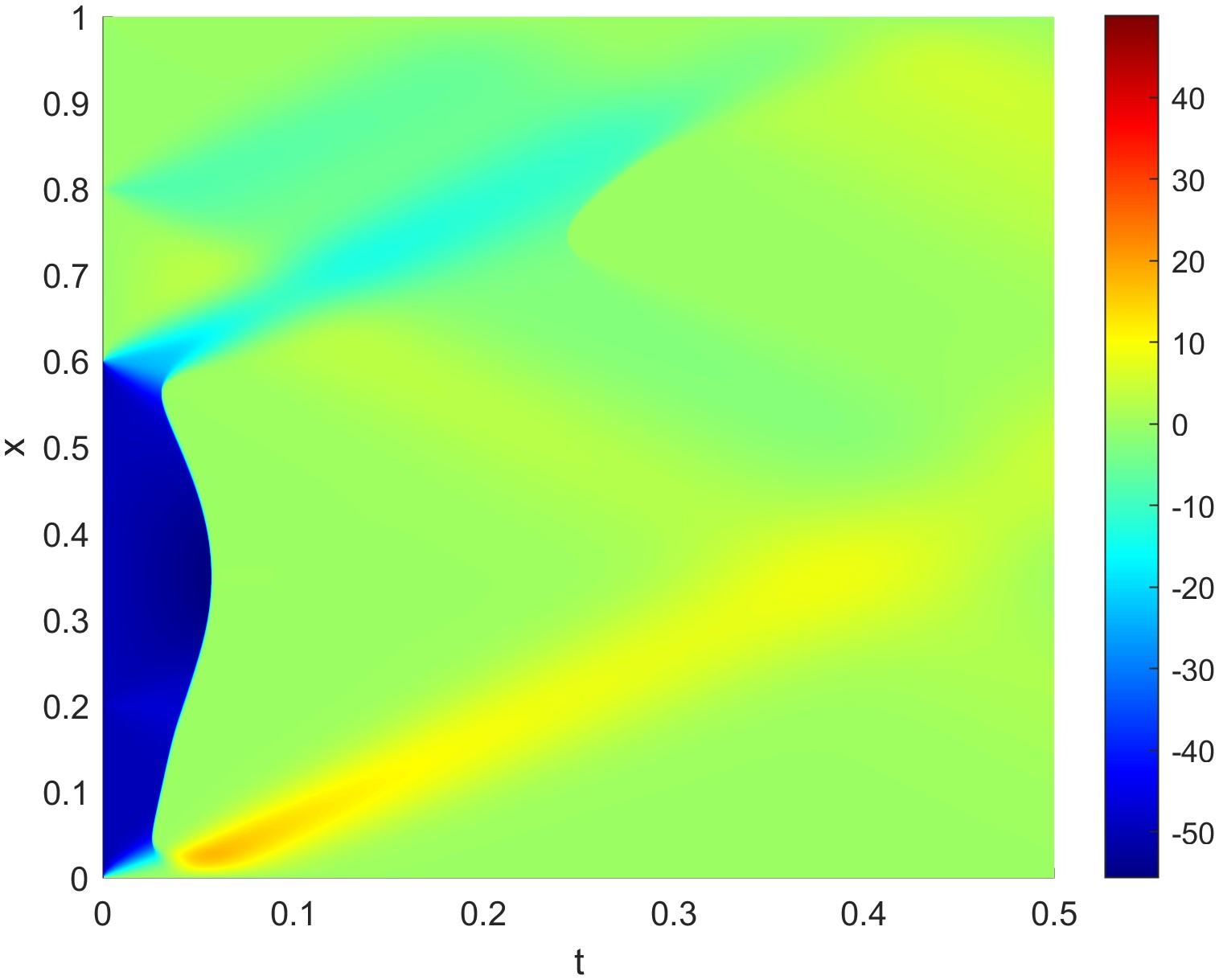}
    \end{subfigure}
    \caption{ Contact set (left) and velocity field (right) for example 2.} 
    \label{ex2}
\end{figure}

\noindent
\textbf{Acknowledgment}.
BM was supported by the Croatian Science Foundation under project number IP-2022-10-2962. ST was supported by the Science Fund of the Republic of Serbia, GRANT No TF C1389-YF, Project
title - FluidVarVisc. The authors are very grateful to the anonymous referees, their remarks significantly helped to improve the quality and clarity of the manuscript.

\Addresses

\end{document}